\newcommand{\HOUR}{\mathbb{H}}
\newcommand{\hour}{h}
\newcommand{\cday}{j}
\newcommand{\Lday}[1]{\underline{#1}}
\newcommand{\Rday}[1]{\overline{#1}}
\newcommand{\lday}{\Lday{\cday}}
\newcommand{\rday}{\Rday{\cday}}
\newcommand{\LDAYS}{\Lday{\DAYS}}
\newcommand{\RDAYS}{\Rday{\DAYS}}
\newcommand{\EDAYS}{\widehat{\DAYS}}
\newcommand{\LARCS}{\underline{\ARCS}}% \stackon[-2pt]{$\ARCS$}{$\leftharpoondown$}}
\newcommand{\RARCS}{\overline{\ARCS}}% \stackon[-2pt]{$\ARCS$}{$\rightharpoondown$}}
\newcommand{\SARCS}{\widetilde{\ARCS}}
\newcommand{\TEG}{\textsf{te}} % time expanded
\newcommand{\algname}[1]{\texttt{\textbf{#1}}}
\newcommand{\kfun}{\kappa}
\newcommand{\StockMax}{\overline{\Stock}}
\newcommand{\StockCritic}{\Stock^{\textsc{cr}}}
\newcommand{\MaxArcsSource}{N_{\source}}
\newcommand{\bijection}[2]{\sigma_{#2,#1}}
\newcommand{\obijection}[2]{\overline{\sigma}_{#2,#1}}
\newcommand{\tbijection}[2]{\widetilde{\sigma}_{#2,#1}}
\newcommand{\hbijection}[2]{\widehat{\sigma}_{#2,#1}}
\newcommand{\TransportPlanning}{T}
\newcommand{\IntegerSet}[1]{[#1]}
\newcommand{\ssarc}[1]{\source{:}k}
\else \newtheorem{result}{Result}\fi
\newcommand{\optional}[1]{}
\newcommand{\rl}[1]{#1}
\def\keywords#1{\par\addvspace\medskipamount{\rightskip=0pt plus1cm
\def\and{\ifhmode\unskip\nobreak\fi\ $\cdot$
}\noindent\keywordname\enspace\ignorespaces#1\par}}
\def\keywordname{{\bfseries Keywords}}%
\title{A Production Routing Problem with Mobile Inventories}
\author{Raian Lefgoum\thanks{Cermics, École Nationale des Ponts et Chaussées, IP Paris, 6 et 8 avenue Blaise Pascal, 77455 Marne la Vallée Cedex 2}
  \and
  Sezin Afsar\thanks{Universidad de Oviedo, Gijón, Principality of Asturias, Spain}
  \and Pierre Carpentier\thanks{UMA, ENSTA Paris, IP Paris, France}
  \and Jean-Philippe Chancelier$^{*}$
  \and Michel De Lara$^{*}$}
\begin{document}
\maketitle

%\tableofcontents

\begin{abstract}
  Hydrogen is an energy vector, and one possible way to reduce CO$_{2}$
  emissions. This paper focuses on a hydrogen transport problem where mobile
  storage units are moved by trucks between sources to be refilled and
  destinations to meet demands, involving swap operations upon arrival. This
  contrasts with existing literature where inventories remain stationary.
  The objective is to optimize daily routing and refilling schedules of the
  mobile storages. We model the problem as a flow problem on a time-expanded
  graph, where each node of the graph is indexed by a time-interval and a
  location and then, we give an equivalent Mixed Integer Linear Programming
  (MILP) formulation of the problem.
  For small to medium-sized instances, this formulation can be efficiently solved using standard MILP solvers. However, for larger instances, the computational complexity increases significantly due to the highly combinatorial nature of the refilling process at the sources. To address this challenge, we propose a two-step heuristic that enhances solution quality while providing good lower bounds.
  Computational experiments demonstrate the effectiveness of the proposed
  approach. When  executed over the large-sized instances of the problem, the median (resp. the
  mean) obtained by this two-step heuristic outperforms the median (resp. the mean)
  obtained by the direct use of the MILP solver by \numprint[\%]{12.7}
  (resp. \numprint[\%]{9}), while also producing tighter lower bounds. These
  results validate the efficiency of the method for optimizing hydrogen
  transport systems with mobile storages and swap operations.

\end{abstract}

% post abstract
\keywords{Production routing problem  \and Hydrogen mobile storage \and Mixed-Integer Linear Program}
%\subclass{49J53 \and  49K99 \and more}

\section{Introduction}

The global shift towards sustainable energy solutions has underscored the
importance of hydrogen as a key component in reducing CO\textsubscript{2}
emissions and addressing climate change concerns. Hydrogen not only offers a
clean alternative fuel source, but also enhances energy security and
flexibility. To support the expanding hydrogen economy, efficient transportation
and distribution systems are essential for meeting (mobility) demands while
minimizing operational costs.

 In this paper, we address an optimal transport problem in which trucks are used to
transport mobile hydrogen storage units (tanks) between \emph{sources} and
\emph{destinations}. The sources represent supply locations where the storages
are refilled with hydrogen, while the destinations correspond to sites where the
stored hydrogen is consumed to meet mobility demands. The transport process
operates bidirectionally, as storages are transported from sources to
destinations to satisfy the demand and storages are also transported
back from destinations to sources for refilling, ensuring a continuous supply
cycle. Within a specified planning horizon, the transport problem objective is to find daily routing and refilling schedules of mobile storage units, in order
to minimize an intertemporal cost taking into account transport, hydrogen
purchase and demand dissatisfaction costs.

This work focuses on a variant of the Production Routing Problem (PRP), where
mobile storages are moved by trucks between sources and destinations. Unlike
conventional routing problems that assume static
inventories~\cite{IRP_transshipment,IRP_pickup_delivery} at delivery locations,
we use the concept of mobile inventories, allowing for dynamic relocation and
swap operations upon delivery which are specific to hydrogen transport. This
added layer of complexity requires optimizing daily routing and refilling
schedules for mobile storage units. 

Another distinguishing feature of this work, compared to the PRP found in the literature, is the highly combinatorial nature of the refilling process at the sources. This complexity arises from three factors: the constraint that exactly one storage unit must be present at each destination at any given time to satisfy demand, the constraint on the refilling capacity at the sources, and the fact that the stock of mobile storage units at the sources cannot be mixed. Specifically, when a new storage unit arrives from a source to replace the one previously stationed at the destination, the departing storage may not be empty, as it must be replaced immediately upon the arrival of the new mobile storage. As a result, multiple partially filled storage units may return to the same source, these units may also depart the next day without reaching full capacity because of refilling capacity at the sources. This, combined with the restriction on stock mixing, increases the complexity of the refilling process, as discussed later in~\S\ref{constraints_node_source_1}.

Now, we describe some assumptions that we make on the movements of storage units.
When a storage unit is transported by truck from a source location to a
destination location, an operation we refer to as a \emph{swap upon arrival} is
to be performed at destination: the truck exchanges the storage unit it
carries with a storage unit located at the destination. Then, the newly
positioned storage unit serves to meet demand at destination, while the
collected storage returns to a source for refilling, transported by the
same truck. To simplify the problem, we assume that \emph{the trip initialized
  by a truck on a specific day is completed within the same day}.  Moreover,
we assume that, at initial time, there is a single
storage unit at each destination. Then, during the optimization time span, we
always have \emph{a single storage unit to meet demand at each destination} (see
Fig.~\ref{fig:two_parts}).
\begin{figure*}
  \begin{center}
    \includegraphics[width=0.4\textwidth]{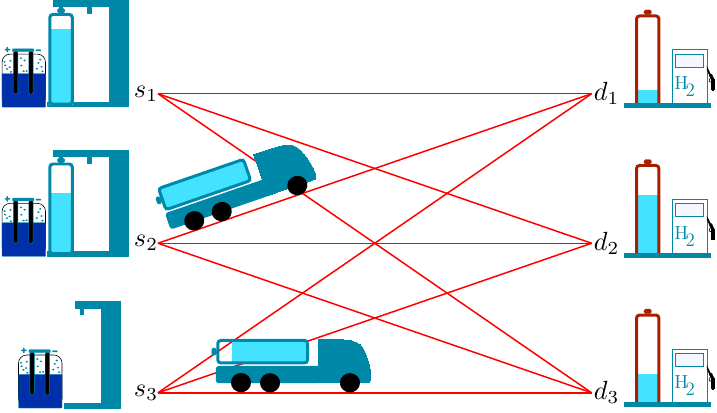}%
    \hspace{0.1\textwidth}%
    \includegraphics[width=0.4\textwidth]{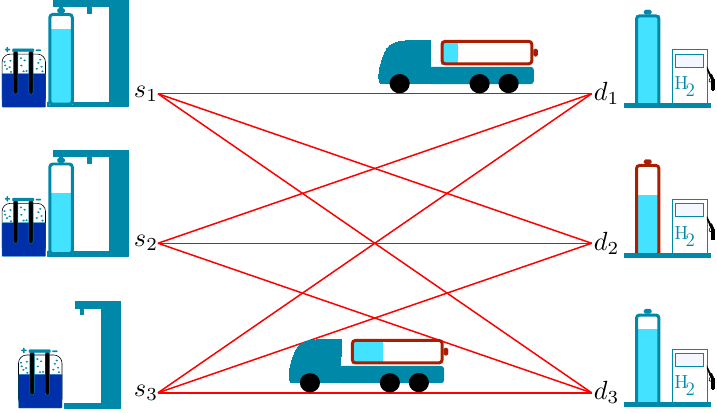}
  \end{center}
  \caption{Storage unit delivery (left) and back to source after swap (right)}
  \label{fig:two_parts}
\end{figure*}
Moreover, we assume that each destination can receive at most one storage per
day, that is, there is at most one swap per
destination and per day. Using the previous assumptions, the delivery planning
of storage units is done on a daily basis. Firstly, we need to determine what
happens to the storage units at the source locations: whether they stay
where they are or they be delivered to a destination. Second, we decide
which source location the collected storage unit goes to.

Daily refill planning of the storage units involves determining the quantity of
hydrogen to be purchased per source location per day. The daily maximum quantity
of hydrogen purchased is assumed to be bounded. Moreover, when several storage
units are present at a given source, the allocation of the purchased hydrogen
between the present storage units is to be decided.

The rest of the paper is organized as follows. We review the existing literature
in this field in Sect.~\ref{se:litterature_review}. Afterwards, in
Sect.~\ref{se:notations}, we give the notations and assumptions for our
problem. We introduce the problem on a time-expanded graph in
Sect.~\ref{se:problem_description_expanded_graph}, and give its model in
Sect.~\ref{sec:modeling}. In Sect.~\ref{se:formulation_solving}, we give the
problem formulation and describe in detail the different algorithms.
Numerical experiments are presented in Sect.~\ref{se:results}. In Sect.~\ref{sec:flow_to_planning}, we demonstrate how the storage routes (transport planning) can be derived from a solution of the problem on this time-expanded flow graph and finally the
conclusion and possible directions for future works are discussed in
Sect.~\ref{se:conclusion}.

\section{Literature review}\label{se:litterature_review}
Logistics and transport optimization have long been critical areas of research, with the \emph{Vehicle Routing Problem} (VRP) (see for example~\cite{VRP2,VRP3,VRP1}) serving as a foundational framework for optimizing the delivery of goods and services. Extending the VRP, the \emph{Inventory Routing Problem} (IRP)~\cite{thirty_years_IRP} integrates inventory management with routing decisions, while the \emph{Production Routing Problem} (PRP)~\cite{PRP_review} further incorporates production planning.

\iffalse

The \emph{Inventory Routing Problems} (IRP)~\cite{first_paper_IRP} involves
determining optimal routes for transporting goods --- in our case, mobile storage
units --- while simultaneously managing the inventory of the storage units at the
delivery (destination) locations. Inventory Routing Problem extends the classic
\emph{Vehicle Routing Problem} (VRP) which is a class of optimization problems
in logistics and transportation planning that aim at determining the most
efficient routes for a fleet of vehicles that leave a depot to deliver goods or
services to a set of customers (see for example~\cite{VRP2,VRP3,VRP1}). The
objective of classical VRP is to minimize total travel distance while ensuring
demand satisfaction whereas other variants introduce various constraints such as
capacitated vehicles, heterogeneous fleet, multi-depot or time windows for
delivery (see for
example~\cite{capacitated_VRP,heterogeneous_fleet_VRP,time_window_VRP})

In addition to the Inventory Routing Problem (IRP), the \emph{Production Routing Problem} (PRP) extends the IRP framework by
incorporating production planning decisions, such as production quantities and
timing, making it particularly relevant for industries
where production and distribution are tightly coupled. The transportation problem discussed in this paper belongs to this class of problem.\fi

As a generalization of the VRP, the IRP and PRP share
its classification as an NP-hard problem~\cite{VRP_NP_HARD} which means that
there is no known algorithm that can solve the IRP and PRP  efficiently (in polynomial
time).

Moreover, aside from the classical IRP and PRP, a wide range of variants have
been studied in the literature. As an example, the IRP with
transshipment~\cite{IRP_transshipment} and PRP with transshipment~\cite{PRP_transshipment} enable the transfer of goods between delivery locations, offering greater flexibility but introducing
additional decision variables, whereas the IRP with pickup and delivery~\cite{IRP_pickup_delivery} and PRP with pickup and delivery~\cite{PRP_pickup_delivery} allow vehicles to visit pickup locations during their routes. These pickups can either be used for future
deliveries or stored at the depot after completing the route. In~\cite{real_life_IRP}, the authors propose a randomized local
search algorithm to address large-scale instances of the IRP, that incorporates
practical constraints such as time windows, driver safety, and site
accessibility for vehicle resources. Additionally, they introduce a surrogate
objective that ensures that short-term optimization solutions yield favorable
long-term outcomes.

Several studies have examined real-life routing problems within the gas and
hydrogen sectors
\cite{IRP_GAS3,IRP_GAS2,IRP_GAS1,ADMM,PRP_GAS1}. In~\cite{IRP_GAS3}, a model for
the liquefied natural gas sector is explored, introducing a decomposition
algorithm using a path flow model with pre-generated duties and added valid
inequalities. In~\cite{IRP_GAS2}, the Stochastic Cyclic Inventory Routing
Problem is addressed with supply uncertainty, applied to green hydrogen
distribution in the Northern Netherlands. The study proposes a combined approach
using a parameterized Mixed Integer Programming model for static, periodic
vehicle transportation schedules and a Markov Decision Process to optimize
dynamic purchase decisions under stochastic supply and
demand. In~\cite{PRP_GAS1}, the authors study a green PRP for medical nitrous
oxide (N$_2$O) supply chains, integrating cost minimization and GHG emission
reduction. A bi-objective model with a Branch-and-Cut algorithm and Fuzzy
Goal Programming is developed to optimize production, transportation, and
environmental impact. In~\cite{ADMM}, the authors develop an optimization model
for the integrated operation of an Electric Power and Hydrogen Energy System
(IPHS), combining unit commitment, hydrogen production, transportation, and
refueling. They formulate hydrogen transportation as a Vehicle Routing Problem
(VRP) to optimize delivery routes. The overall problem is solved using an
Augmented Lagrangian Decomposition method by dualizing the constraints linking
the hydrogen production and transportation models. In~\cite{IRP_GAS1}, a MILP
model is developed for optimizing distribution and inventory in industrial gas
supply chains, integrating vehicle routing with tank sizing to minimize costs.
However, all these works typically assume fixed inventories. 

In contrast, in this paper we focus on a novel variant of the PRP involving
\emph{mobile inventories}. This problem arises in the context of hydrogen
distribution, where mobile storage units at destinations must be swapped and
transported to other locations.
%as there is a hard constraint on the possible
% number of storages at destination} \rl{le swap arrive même si y'avait pas la hard constraint} \rl{must be swapped and transported b%ack to sources upon the arrivel of another storage from a source} \rl{}.
To the best of our knowledge, studying
this type of PRP, especially in the context of hydrogen transport, is new.

%\begin{table*}[hbtp]
%  \begin{center}
%    \begin{tabular}{|c|c|c|c|c|}
%    \hline 
%      Paper & Prod. & Route & Inv. & Extra \\\hline\hline 
%      [xx]  &  No   & Tour  & No   & No    \\\hline 
%      [yy]  &  No   & Back and forth  & Yes  & No    \\\hline 
%      [zz]  &  Yes  & Back and forth  & Yes  & Mobile Inv \\\hline 
%    \end{tabular}
%  \caption{\jpc{Peut-etre rajouter un tableau comme ça pour la review de litterature}}
%\end{center}
%\end{table*}

More precisely, the contributions of this paper are the following.
\begin{itemize}
\item We tackle a case study from an industrial partner about a hydrogen delivery problem. 
\item We introduce a new class of problems that we call
  \emph{Production Routing Problem with Mobile Inventories} (PRP-MI).
  As already mentioned, PRP-MI are characterized by the fact that
  mobile storages at destinations can be swapped and transported to other
  locations. Additionally, since these mobile storages can arrive partially filled and depart without reaching full capacity, the refilling process at the sources presents a combinatorial challenge that must be taken into account in the model. In our study, these mobile storages are used for hydrogen distribution, however, the proposed framework can be applied to other transport problems with other commodities and similar logistical challenges. 
  
\item We propose a formulation of a PRP-MI optimization problem on a
  time-expanded graph and present an equivalent Mixed Integer Linear Program
  (MILP) model.
\item We propose a two-step heuristic approach to effectively solve large-scale instances
  of the PRP-MI. First, this heuristic simplifies the problem by omitting
  the combinatorial constraints of the refilling process at the sources to obtain a transport plan for the
  storages. Second, it optimizes the quantities purchased at the sources
  and their allocation to the storages.
\item We conduct comprehensive numerical experiments to compare the performance of three different methods for solving the PRP-MI, evaluating their solution quality across diverse problem instances. 
\end{itemize}

\section{Problem notations and assumptions}
\label{se:notations}
In this section, we turn the problem narrative given in the introduction to a
more formal optimization problem description, and give all assumptions of the problem.

For that purpose, we start by introducing some notations. We denote by
\(\DAYS = \na{1,\ldots, J}\) the set of days of the optimization planning
horizon. Moreover, to follow the problem narrative and as suggested in
Fig.~\ref{fig:two_parts}, we need to refer to the time-intervals before and
after a swap at each destination. For that purpose we introduce two sets related
to day specification, the set $\LDAYS = \nset{ \lday}{\cday \in \DAYS}$ (resp. the
set $\RDAYS = \nset{ \rday}{\cday \in \DAYS}$), where for a day $\cday$, the
notation $\lday$ stands for the first part of the day, when decisions to move
storages from sources are made (resp. $\rday$ stands for the second part of the
day, when deciding to which source to return a storage unit after a swap at
destination).  \optional{To simplify some notations, we also identify symbol
  notation like $\underline{\aday}$ as the value of $j$ by the mapping
  $\underline{(\cdot)}: \DAYS\ni j \mapsto \underline{j} \in \LDAYS$ and the same for
  $\overline{\aday}$.} We then consider the set
$\EDAYS= (\LDAYS \cup \RDAYS) \cup \{\Rday{0}\}$ which is equipped with a finite total
order:
\begin{equation}
  \Rday{0} \preceq \Lday{1} \preceq\Rday{1}\preceq\cdots \preceq \Lday{J} \preceq \Rday{J}
  \eqfinv
  \label{eq:total_order}
\end{equation}
and where $\Rday{0}$ is an extra time index at which the initial conditions of
the problem are given (see Figure~\ref{fig:example_expanded_graph}). For
$i \in \EDAYS$, we denote by $i^+$ the successor (resp.  by $i^{-}$ the
predecessor) of time $i$ if it exists in the total order $(\EDAYS, \preceq)$:
\begin{align*}
  \forall \cday \in \DAYS\eqsepv \np{\Lday{j}}^+
  &= \Rday{j} \eqsepv \np{\Lday{j}}^- = \Rday{j{-}1}\eqfinv
  \\ 
  \forall \cday \in \DAYS\backslash \na{J}
  \eqsepv
  \np{\Rday{j}}^+
  &= \Lday{j{+}1} \eqsepv % \np{\Rday{j}}^- = \Lday{j}
  \\
  \forall \cday \in \DAYS
  \eqsepv
  \np{\Rday{j}}^- &= \Lday{j}
    \eqfinp
\end{align*}

The hours within a day are denoted by $\HOUR=\na{0,\ldots,23}$ and we denote by
$\underline{h}=0$ the smallest (resp. $\overline{h}=23$ the greatest) element of $\HOUR$. 

We also denote by \(\SOURCES\) the set of sources, by \(\STATIONS\) the set of
destinations (also called stations), and by \(\STORAGES\) the set of mobile
storage units. We denote by \(\GRAPH = (\VERTEX, \ARCS)\) the undirected
transportation graph. The set of nodes $\VERTEX$ is the disjoint union of the
sources and destinations, that is \(\VERTEX = \SOURCES \sqcup \STATIONS\). As we assume that
there is no transport between two destinations or two sources, the graph $\GRAPH$ is a
bipartite graph with respect to \(\SOURCES\) and \(\STATIONS\). Without loss
of generality, the graph \(\GRAPH\) is assumed to be complete, meaning
that each destination is reachable from any source and vice-versa with known
transport times that are not necessarily symmetric. Specifically, the transport
time from a source $\source$ to a destination $\destination$, denoted as
$t_{\source,\destination}$, may differ from the transport time from the same
destination $\destination$ back to the same source $\source$, denoted as
$t_{\destination,\source}$. In addition to the transport time between a source
and a destination, we consider a mapping $g: \SOURCES\times\STATIONS \to \RR_{+}$ which
for a given oriented pair $(s,d)$, $g(s,d)$ gives the sum of three terms:
the time needed to load the truck with a storage unit at the source $\source$;
the transport time from $\source$ to $\destination$; the swap time of the
storage units at the destination $\destination$.

For any set $\mathbb A$, we denote by $\findi{\mathbb{A}}(\cdot)$ the indicator
function of $\mathbb{A}$ that equals $1$ when its argument belongs to
$\mathbb{A}$ and $0$ otherwise, by $\Delta_{\mathbb A}$ the diagonal set
$\Delta_{\mathbb A}= \nset{(a,a)}{a\in \mathbb A} \subset A^2$ and, when
$\mathbb A$ is finite, by $\cardinal{\mathbb A}$ its cardinality. Finally, for
all $n\in \NN$, where $\NN$ is the set of natural numbers, we denote the set
$\na{1,\ldots,n}$ by $\IntegerSet{n}$.

The following assumptions are made on the transport model: 
\begin{enumerate}[label=$A_{\arabic*}$]
\item\label{Aun}: All mobile storage units have the same maximal capacity, denoted
  by $\StockMax$;
\item\label{Adeux}: The maximal capacity $\StockMax$ is designed
  to meet at least one day's demand at any destination;
\item\label{Atrois}:  Each destination can receive at most one storage per day;
\item\label{Aquatre}: Each storage is transported at most once per day;
\item\label{Acinq}: Each truck departing from a source \\$\source \in \SOURCES$ starts at a
  given predetermined hour in the morning $\hour_0$;
\item\label{Asix}: The maximum number of storage units that
  can stay at source $\source \in \SOURCES$ is bounded by $\kfun(\source) \in \NN$;
\item\label{Asept}: There is no refill at the sources during the first part of the day;
\item\label{Ahuit}: There is always exactly one storage per destination per day (except during a swap).
\end{enumerate}

As introduced previously, we have two time scales in the transport problem, a
day time scale ($\DAYS$) and an hour time scale ($\HOUR$). We have introduced a
day split in two parts in the previously and we give now the consequence
of this day split at hour level.  Given a source $\source$ from which a storage
unit is sent to a destination $\destination$, the end of the swap at destination
will occur at hour $\hour_0 + g(\source,\destination)$ using
Assumption~\ref{Aquatre} and~\ref{Acinq} and the definition of the mapping $g$.
At each destination $\destination\in \STATIONS$, we divide each day $\aday$
duration into two time periods: the first part of the day which is the time
period $[\underline{h},\SwapV{\aday}{\source}{\destination}[$ from midnight
($\underline{h}$) to $\SwapV{\aday}{\source}{\destination}$, where
$\SwapV{\aday}{\source}{\destination}$ denotes the hour at which the swap ends
at destination $\destination$ at day $\aday$ and the last, or second, part of
the day which is the time period
$[\SwapV{\aday}{\source}{\destination},\overline{h}+1[$ from the end of swap
until midnight ($\overline{h}+1$).  Indeed, using the definition of the function
$g$ and of time $h_0$, we have that
$\SwapV{\aday}{\source}{\destination} = \hour_0 + g(\source,\destination)$,
where $\source$ is the source from which the storage unit was transported to
$\destination$ on day $\aday$.  Together with Assumption~\ref{Atrois}, during a
fixed day, we obtain that at most two distinct storages are used to fulfill the
demand, one for each time period. Finally, if during day $\aday$ no storage is
sent to destination~$\destination$, there is no swap time at destination and we
arbitrarily divide the day into two time periods, which will be explained in
more details later in Equation~\eqref{eq:forced_swap_time}. An example of a swap
is illustrated in Fig.~\ref{fig:hympulsion_swap}.

\begin{figure*}[t]
\begin{align*}
  s \to \overbrace{\includegraphics[scale=.15]{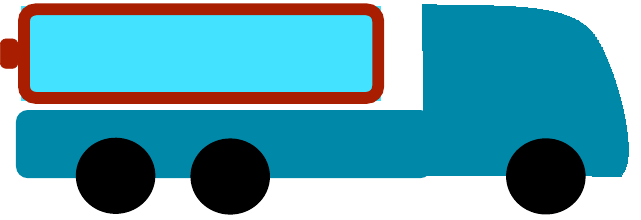}}^{\text{from $s$ to $d$}} \to d \hspace{2cm}
  \underbrace{\includegraphics[scale=0.15]{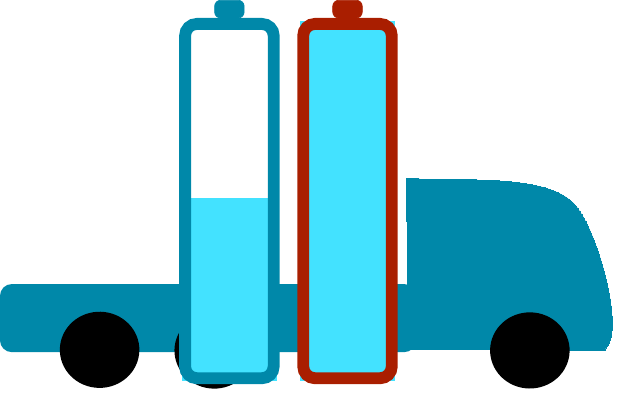}}_{\text{swap at $d$}}
  & \hspace{2cm}%
    d \to \overbrace{\includegraphics[scale=0.15]{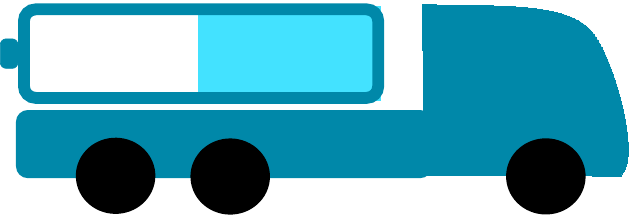}}^{\text{from $d$ to $s'$}} \to s'
  \\
  \underbrace{\mbox{\includegraphics[height=1.5cm]{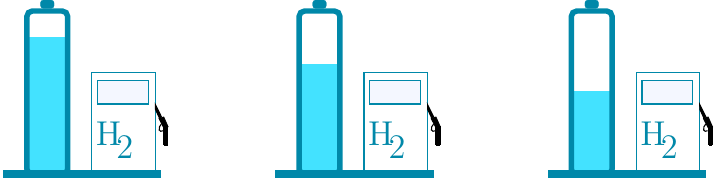}}\hspace{1.5cm}}%
  _{\text{$d$: first part of the day $[0,\SwapV{\aday}{\source}{\destination}[$}}
  &
    \underbrace{\mbox{\includegraphics[height=1.5cm]{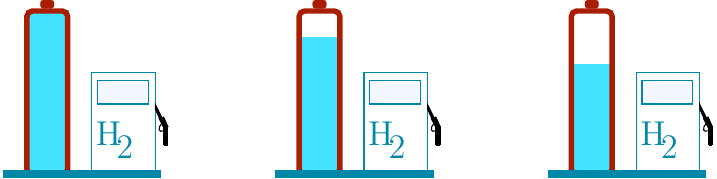}}}%
    _{\text{$d$: second part of the day $[\SwapV{\aday}{\source}{\destination},\overline{h}[$}}
\end{align*}
\caption{The day division at a destination $d$: a truck picks up a storage unit
  from the source $s$ at \(\hour_0\), transports it to the destination $d$,
  performs the swap, and then returns the collected storage unit back to a
  source $s'$.  During the first part of the day, the (blue) mobile storage
  originally located at destination $d$ satisfies the demand, and during the
  second part of the day, the newly arrived mobile storage (red) fulfills the
  demand}
\label{fig:hympulsion_swap}
\end{figure*}

\section{Problem description on a time-expanded flow graph}
\label{se:problem_description_expanded_graph}

\iffalse 
We have first tried a modeling of the optimization problem where all decision
variables were indexed by storage units, that is elements of the set
\(\STORAGES\). Indeed, in such a formulation symmetrical solutions may be
found.  As an illustration, consider a case where, two full storage units are
available at a source and ready for dispatch. Since storage units are identical,
interchanging them does not affect the structure or cost of the solution,
creating symmetrical solutions. This led to a MILP formulation that the solver
struggled to solve efficiently even for small-sized instances. This is mainly
due to the presence of symmetries which are known to be difficult to solve using
branch-and-cut codes due to the redundant solutions which populate the
enumeration tree, as described and analyzed for example
in~\cite{Margot2010}. For this reason, we adopt a flow-based reformulation,
which we now describe. 
\fi

In the context of routing problems with a fleet of vehicles, previous studies
(e.g.,~\cite{comparison_formulation_IRP1,comparison_formulation_IRP2}) have
demonstrated that vehicle-indexed formulations yield high-quality solutions when
considering complete tours and a limited number of vehicles (typically fewer
than five). However, our problem differs in that the routes follow a
back-and-forth structure: storage units depart from the source, reach a
destination, and the storage unit previously stationed at that destination
immediately returns, rather than forming complete multi-stop tours.

Moreover, in our case, it is the mobile storage units that are transported
between locations and that require the monitoring of their stock, making a
storage-indexed formulation more appropriate rather than a vehicle-indexed
formulation. This approach, however, poses a significant numerical challenge as
we expect our model to handle a large number of storage units, and using a
storage-indexed formulation leads to an excessive number of variables. In a preliminary study (not described here), we experimented with such formulation but encountered severe
computational difficulties due to the presence of symmetrical solutions. For
example, if two identical full storage units are available at a source for
dispatch, interchanging them does not alter the solution structure or cost, yet
it introduces redundancies that significantly impact solver performance. This
issue is further exacerbated as the number of mobile storage units increases, and
in the absence of appropriate symmetry-breaking constraints. We refer
to the study in~\cite{Margot2010} for the analysis of MILP models in the presence
of symmetries.

To keep the number of variables reasonable and to avoid such symmetries, we propose a \emph{time-expanded} graph reformulation of our
problem. This reformulation eliminates the need for mobile storage indexing,
thereby breaking some symmetries and is expected to improve numerical
resolution. For that purpose, we build from the transport graph a new graph
commonly called~\cite{time_expended_network,time_expended_network2} a
\emph{time-expanded graph} (or graph dynamic network flow). The transport of
storage unit is replaced by considering both continuous and binary flow
variables attached to the arcs of the time-expanded graph and satisfying flow
conservation equations (Kirchhoff law) at the graph nodes (to be explained
later).

%Moreover, it is known that mixed integer
%linear problems involving flow variables can be efficiently solved in linear
%solvers as their relaxed version often give good lower bounds (see~\cite[Chapter
%4]{Conforti-Cornuejols-Zambelli:2014} for general results).

In Sect.~\ref{sec:flow_to_planning}, we demonstrate how the storage routes (transport
planning) can be derived from a solution of the problem on this time-expanded
flow graph.

\subsection{Definition of the time-expanded graph}

We turn now to the precise description of the time-expanded graph. In~\S\ref{sbsec:def_nodes} we define the nodes of the time-expanded graph and in~\S\ref{sbsec:def_arcs}, we define its arcs.

\subsubsection{Definition of nodes}\label{sbsec:def_nodes}
First, we define the set of nodes of the time-expanded graph.
Each node of the expanded graph is an ordered pair composed of a node of the physical
transportation graph (source or destination), that is an element of \(\VERTEX\),
and of a time index belonging to $\EDAYS = \LDAYS \cup \RDAYS \cup \{\Rday{0}\}$.  Thus,
the set of nodes of the time-expanded graph, denoted as $\VERTEX^{\TEG}$,
is defined by $\VERTEX^{\TEG}=\VERTEX\times\EDAYS$.
% In other words, the nodes of the time-expanded graph are the nodes of the
% initial transportation graph tagged by a time index.

\subsubsection{Definition of arcs}
\label{sbsec:def_arcs}
Second, we define the arcs of the time-expanded graph. The time-expanded graph
we build is an oriented multigraph, that is, each edge in the graph is oriented,
and two nodes may be connected by more than one oriented edge. As it is used to
model physical flows between nodes, an arc is only present between two nodes of
$\VERTEX^{\TEG}$ indexed by two consecutive elements of the time index $\EDAYS$,
that is, between a node $(v,i)$ and a node $(v', i^+)$ where $i^+$ is the
successor of $i\in\EDAYS$ (see Equation~\eqref{eq:total_order}). For that reason,
we only use one time index in an arc specification, that is, the arc
$(v, i) \mapsto (v', i^+)$ in the time-expanded graph is denoted by $(v,v', i)$ and
$\varphi_{v,v',i}$ when an arc is used as a flow index and a specific notation when
two nodes are connected by several arcs as described now.

Since at most one storage unit is transported
between a source and a destination (and vice-versa) using
Assumption~\ref{Atrois}, and exactly one storage is present at each destination
and each time using Assumption~\ref{Ahuit}, a single arc
$(v,v',i)=(v, i) \mapsto (v', i^+)$ is sufficient to represent the flow of hydrogen
between two nodes $(v,i)$ and $(v',i^+)$ of the time-expanded graph for
$(v,v') \in (\SOURCES{\times}\STATIONS) \cup \Delta_{\STATIONS}$ or
$(v,v') \in (\STATIONS{\times}\SOURCES) \cup \Delta_{\STATIONS}$, where
$\SOURCES \times \STATIONS$ and $\STATIONS \times \SOURCES$ correspond to the arcs of the
physical graph $\GRAPH$, and, the addition of $\Delta_{\STATIONS}$ ensures that a
storage located at a destination can remain at that destination, while also
preventing its transport to a different destination. However, the situation is
slightly more complicated when the two involved physical nodes are the same
source node, that is $v=v'$ with $v\in \SOURCES$.  As multiple storage units may
remain at a source without being dispatched to a destination, we need several
arcs between $(\source,i)$ and $(\source,i^+)$ for $s\in \SOURCES$. This number of
arcs is bounded for each source $s\in \SOURCES$ by the value $\kfun(s)$ where the
function $\kfun$ is given by Assumption~\ref{Asix}. Finally, for
$k\in \IntegerSet{\kfun(\source)}$, the notation $\lTEGarc{i}{\source}{k}$ denote
the $k$-th arc $(\source,i) \mapsto (\source,i^+)$. We also use the
notation $\varphi_{\source: k , i}$ when a $k$-th arc from
a source is used as a flow index.

To conclude, the time-expanded graph is defined by
$\GRAPH^{\TEG} = (\VERTEX^{\TEG}, \ARCS^{\TEG})$, where the set of nodes
$\VERTEX^{\TEG}$ is $\VERTEX \times \EDAYS$ and the set of arcs $\ARCS^{\TEG}$ is
formally defined as follows:
\begin{subequations}
\begin{align}
  \ARCS^{\TEG}
  &= \LARCS \cup \RARCS \cup \SARCS
  \\
  {\text{with }}&\LARCS
  = \bp{(\SOURCES{\times}\STATIONS) \cup \Delta_{\STATIONS}}\times \LDAYS
  \eqfinv\label{eq:set_arc_first_part}
  \\
  &\RARCS
    = \bp{(\STATIONS{\times}\SOURCES) \cup \Delta_{\STATIONS}}\times (\RDAYS\cup \{\Rday{0}\})
    \eqfinv\label{eq:set_arc_second_part}
  \\
  &\SARCS
    =  \bset{(\source{:}k)}{s\in \SOURCES, k \in \IntegerSet{\kfun(s)}}
    {\times} \EDAYS
    \eqfinp\label{eq:set_arc_source}
\end{align}
\end{subequations}

The set $\LARCS$ defined in Equation~\eqref{eq:set_arc_first_part} represents
arcs whose first node has its time index in $\LDAYS$, excluding arcs that
connect sources to themselves. The set $\RARCS$ defined in
Equation~\eqref{eq:set_arc_second_part} represents arcs whose first node has
its time index in $\RDAYS \cup \{\overline{0}\}$, excluding arcs that connect sources to
themselves. The set $\SARCS$ defined in Equation~\eqref{eq:set_arc_source}
represents arcs that connect each source to itself.

\begin{definition}\label{def:arc_properties}
For an arc
$\arc \in \ARCS^{\TEG}$, we denote by $\tailarc{\arc} \in \VERTEX^{\TEG}$ its tail
node, by $\headarc{\arc} \in \VERTEX^{\TEG}$ its head node and by
$\dayarc{\arc} \in \EDAYS$ the time index of its tail node.
\end{definition}

We show in Fig.~\ref{fig:example_arc_expanded_graph} an example of a
time-expanded graph with one source and two destinations, where the arcs
belonging to $\LARCS$ are in green, to $\RARCS$ are in red and to $\SARCS$ are
in blue.

\begin{figure}[hbtp]
  \begin{center}
    \includegraphics[width=0.45\textwidth]{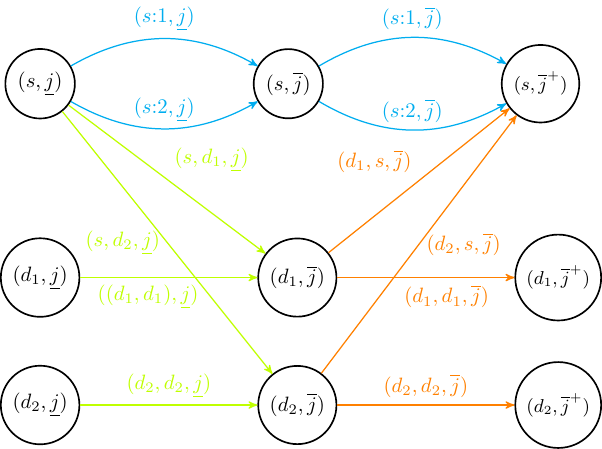}
  \end{center}
  \caption{Graphic display of a time-expanded graph with one source -- where the maximum number of unit storages
    at source is 2 --- and two destinations. 
    The arcs belonging to $\LARCS$ are in green, to $\RARCS$ are in orange and to $\SARCS$ are in blue}
  \label{fig:example_arc_expanded_graph}
\end{figure}

\begin{figure*}
  \begin{center}
    \includegraphics[width=0.8\textwidth]{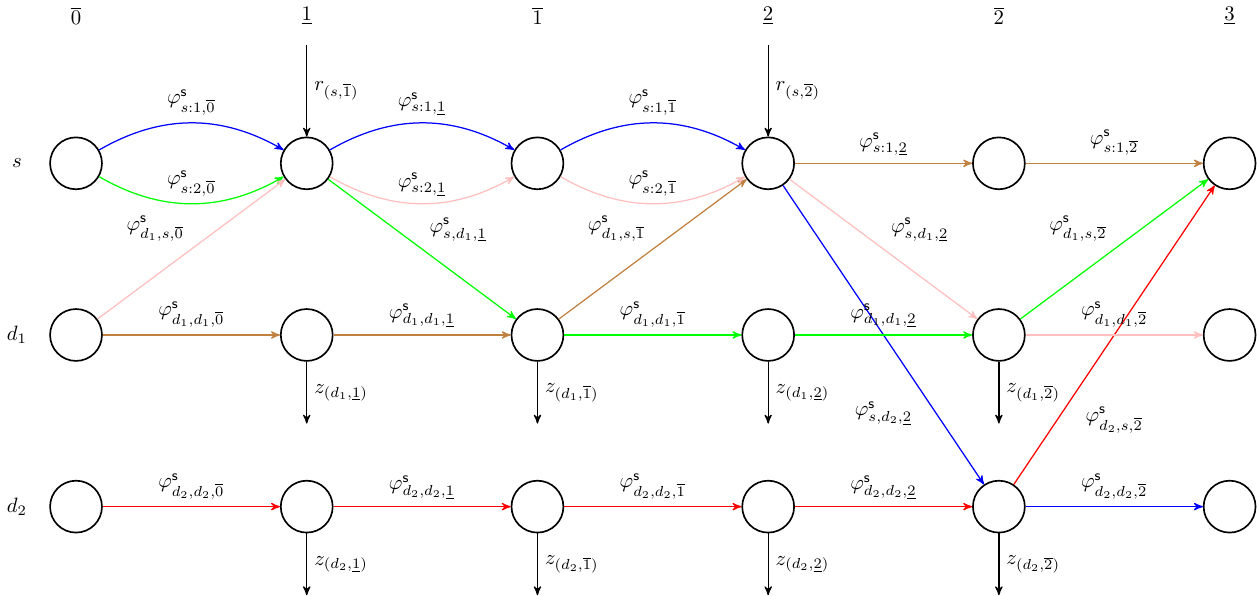}
  \end{center}
  \caption{Display on a time-expanded graph (with one source, two destinations
    and an horizon of two days) of a storage flow solution $\Storageflow$.
    We only display the subset of arcs where the
    storage flow is equal to one. As developed in Sect.~\ref{sec:flow_to_planning},
    transport planning, represented here arc paths (sequence of arcs with the same color),
    can be derived from $\Storageflow$.
    \label{fig:example_expanded_graph}}
\end{figure*}

\subsection{Flows in the time-expanded graph}\label{flow_model_description}

On the time-expanded graph $\GRAPH^{\TEG} = (\VERTEX^{\TEG}, \ARCS^{\TEG})$, we
mainly consider two flows. First, a continuous flow, that we call \emph{hydrogen
  flow}, which is a function $\Hydrogenflow: \ARCS^{\TEG} \to [0,\StockMax]$ which
is used to quantify the quantity of hydrogen transported along the arcs, where
$\StockMax$ represents the maximum possible hydrogen flow, which also
corresponds to the maximum capacity of a storage $\StockMax$ given by
Assumption~\ref{Aun}. Second, a binary flow, that we call \emph{storage flow},
which is a function $\Storageflow: \ARCS^{\TEG} \to \na{0,1}$ which is used to
detect if a storage is used to transport hydrogen along an arc. For simplicity,
for any arc $a \in \ARCS^{\TEG}$, we will denote by $\Hydrogenflow_{a}$ (resp. $\Storageflow_{\arc}$) the value returned by
the function $\Hydrogenflow$ (resp. $\Storageflow)$ for the same arc, that is $\Hydrogenflow_a=\Hydrogenflow(a)$ (resp.  $\Storageflow_a=\Storageflow(a)$). The relationship between
the continuous and binary flow will be established in
Equation~\eqref{eq:flow_arc}.

\section{Problem modeling on graph $\GRAPH^{\TEG}$}
\label{sec:modeling}
In this section, we model an optimization transport problem on the time-expanded
graph and we explicit the constraints on the continuous and binary flow
variables at the nodes and on the arcs of the time-expanded graph.

\subsection{Decision variables}
The decision variables are composed on the one hand of the flow decision variables introduced in~\S\ref{flow_model_description} and described in Table~\ref{table:flow_decisions}, and on the other hand of the node decision variables, which are the refilling decisions at the sources and the emptying and swap decisions at the destinations, and which are described in Table~\ref{table:node_decisions}.
\begin{table*}[t]
  \begin{center}
  \begin{tabular}{|c|c|c|}
    \hline 
    Decision
    &Domain
    & Description\\ \hline \hline
    $\Hydrogenflow = \nseqa{\Hydrogenflow_a}{a\in \ARCS^{\TEG}}$
    &$[0,\StockMax]^{|\ARCS^{\TEG}|}$
    & $\Hydrogenflow_a$ is the hydrogen flow in kg
    \\
    && circulating on the arc $a \in \ARCS^{\TEG}$
    \\ \hline
    $\Storageflow= \nseqa{\Storageflow_a}{a\in \ARCS^{\TEG}}$
    &$\na{0,1}^{|\ARCS^{\TEG}|}$
    &     $\Storageflow_a$ is  the storage flow circulating
    \\ &&
          on the arc $a \in \ARCS^{\TEG}$. \\ \hline
  \end{tabular}
  \caption{Hydrogen and storage flow variables\label{table:flow_decisions}}
\end{center}
\end{table*}

\begin{table*}[t]
  \begin{center}
  \begin{tabular}{|c|c|c|}
    \hline 
    Decision
    &Domain
    & Description
    \\ \hline\hline
    $z= \nseqa{\nodeV{z}{i}{\destination}}{(\destination,i)\in \STATIONS{\times}(\LDAYS \cup \RDAYS)}$
    &$\nodeV{z}{i}{\destination} \in [0,\StockMax]$
    &$\nodeV{z}{i}{\destination}$ is the quantity of hydrogen in kg used 
    \\
    &
    & to satisfy demand at node $(\destination, i)$
    \\ \hline
    $\swap=\nseqa{\SwapV{\aday}{\source}{\destination}}%
    {(\source,\destination,\aday)\in \SOURCES{\times}\STATIONS{\times}\DAYS}
    $
    &$\SwapV{\aday}{\source}{\destination}\in \HOUR$
    &$\SwapV{\aday}{\source}{\destination}$ is the hour at which swap happens at
    \\
    & & destination $\destination$ during day $\aday \in \DAYS$
    \\
    & & for a storage coming from $\source$
    \\
    & & (arbitrary when no swap)
    \\ \hline
    $r= \nseqa{\nodeV{r}{\Lday{\aday}}{\source}}{(\source,\Lday{\aday})\in \SOURCES{\times}\LDAYS}$
    &$\nodeV{r}{\Lday{\aday}}{\source}\in [0,\overline{r}_\source]$
    &$\nodeV{r}{\Lday{\aday}}{\source}$ is the quantity of hydrogen in kg used
    \\
    & &  to fill the storages at node  $(\source,\Lday{\aday})$
    \\ \hline
    $\sigma= \nseqa{\bijection{\Lday{\aday}}{\source}}{(\source,\Lday{\aday})\in \SOURCES{\times}\LDAYS}$
    && $\bijection{\Lday{\aday}}{\source}$ is a bijection of $[|\STATIONS| + \kfun(\source)]$ used to 
    \\
    && ``follow'' storages at sources (see~\eqref{eq:in_out_flow_source}-\eqref{eq:permutation_constraints})
    \\ \hline
  \end{tabular}
  \caption{Node variables\label{table:node_decisions}}
  \end{center}
\end{table*}

\subsection{Arc and node constraints}

The initial position of the storages on the time-expanded graph $\GRAPH^{\TEG}$
are part of the data of the optimization problem.  They are given by selecting
the active storage flow at time index
$\Rday{0}$.  Therefore, the active storage flow at time index $\Rday{0}$ must be
equal to the cardinality of the storages set, that is,
\begin{equation}\label{eq:initial_position_storage_set}
  \sum_{ \nset{ \arc \in\ARCS^{\TEG}}{\dayarc{a}=\Rday{0}}} \Storageflow_a
  = \cardinal{\STORAGES}
  \eqfinp
\end{equation}
Here, $\dayarc{\arc}$ denotes the time index of the starting node of $\arc$, as introduced in Definition~\ref{def:arc_properties}.
\subsubsection{Arc constraints}
\noindent$\bullet$ If there is a positive hydrogen flow on an
arc $a \in \ARCS^{\TEG}$, that is, if $\Hydrogenflow_a >0$, then a storage must be
present on the same arc to realize the transport, that is, $\Storageflow_a =
1$. Thus the two flows $\Hydrogenflow$ and $\Storageflow$ are linked by the
following relation
\begin{subequations}\label{eq:flow_arc}
\begin{align}
 \forall a \in \ARCS^{\TEG} \eqsepv
  \Hydrogenflow_a >0 \implies \Storageflow_a =1 \label{flow_arc_1} \eqfinp
\intertext{If no storage is sent on an arc then, there is
no hydrogen flow transported on the same arc, that is,}
  \forall a \in \ARCS^{\TEG} \eqsepv 
  \Storageflow_a = 0  \implies \Hydrogenflow_a =0
  \eqfinp \label{flow_arc_bis} 
\end{align}
\end{subequations}
Note that the converse implication does not
hold. Indeed, it is possible --- and important for returning empty unit storages
to sources --- for a storage to be transported on an arc, $\Storageflow_a =1$
with an empty stock $\Hydrogenflow_a = 0$.

\noindent$\bullet$ For all $\destination \in \STATIONS$ and for all
$\aday \in \EDAYS$, there must always be exactly one storage at each destination
(see Assumption~\ref{Ahuit})
\begin{equation}\label{eq:always_storage_destination}
  \SFlow{\aday}{\destination}{\destination}
  = 1 \eqfinp 
\end{equation}

\subsubsection{Constraints at nodes in $\STATIONS{\times}\LDAYS$}
%\rl{ici ne parler que de j barre en bas et faire ça dans les autres subsubsection}\rl{done mais c moche}
\noindent$\bullet$ For all $\destination \in \STATIONS$ and all
$\aday \in \DAYS$, the hydrogen outflow
$\HFlow{\Lday{\aday}}{\destination}{\destination}$ at node
$(\destination,\Lday{\aday})$, is equal to the hydrogen inflow
$\HFlow{\Lday{\aday}^{-}}{\destination}{\destination}$ at the same node, minus
the quantity $\nodeV{z}{\Lday{\aday}}{\destination}$ used to satisfy the demand
during the first part of the day (flow conservation), that is, for all
$\destination \in \STATIONS$ and $\aday \in \DAYS$
\begin{equation}
  \label{eq:demand_satisfaction1}
  \HFlow{\Lday{\aday}}{\destination}{\destination} =
  \HFlow{\Lday{\aday}^-}{\destination}{\destination}-
  \nodeV{z}{\Lday{\aday}}{\destination}  \eqfinp
\end{equation}

\noindent$\bullet$ For all $\destination \in \STATIONS$ and $\aday \in \DAYS$, the total
demand $\nodeV{\Demand}{\Lday{\aday}}{\destination}$ at node
$(\destination,\Lday{\aday})$ (total demand at day $\aday$ before the swap time), is given by
\begin{subequations}
\begin{align}
  \nodeV{\Demand}{\Lday{\aday}}{\destination}
  &= \sum\limits_{\hour \in \HOUR} \findi{\na{\hour}}(\SwapV{\aday}{\source}{\destination})
  \times C_{\destination,\aday,\hour}\label{eq:demand_destination1} \eqfinv
\intertext{with}
 C_{\destination,\aday,\hour}&= \sum\limits_{\hour'=0}^{\hour}q_{\demand,\aday,\hour'} \eqfinv
\end{align}
\end{subequations}
where $q_{\demand,\aday,\hour}$ is the demand at hour
$\hour$ of day $\aday$ at destination $\destination$ and
$C_{\destination,\aday,\hour}$ is the cumulative demand from midnight (0) to
hour $\hour$ of day $\aday$ at destination $\destination$.

\medskip
\noindent$\bullet$ For all $\destination \in \STATIONS$ and $\aday \in \DAYS$, the
quantity $\nodeV{z}{\Lday{\aday}}{\destination}$ used to satisfy the demand at
node $(\demand,\Lday{\aday})$, is equal to the minimum between the inflow at the
same node and the demand, that is,
\begin{equation}\label{eq:emptying_bounded1}
  \nodeV{z}{\Lday{\aday}}{\destination}
  =\min( \HFlow{\Lday{\aday}^-}{\destination}{\destination}, \nodeV{\Demand}{\Lday{\aday}}{\destination}) \eqfinp
\end{equation}
This ensures that the quantity  $\nodeV{z}{\Lday{\aday}}{\destination}$ to satisfy the demand is equal to the demand if there is enough hydrogen, otherwise, it is equal to the available hydrogen inflow.

\subsubsection{Constraints at nodes in $\STATIONS{\times}\RDAYS$}
\label{sub:node_rday}
\noindent$\bullet$ For all $\destination \in \STATIONS$ and for all
$\aday \in \DAYS$, a node $(\destination,\Rday{\aday})$ can have at most one
positive storage inflow originating from a source node, that is,
\begin{equation}\label{eq:at_most_one_storage_destination}
  \sum\limits_{\source \in \SOURCES} \SFlow{\Rday{\aday}^-}{\source}{\destination} \leq 1 \eqfinp
\end{equation}

\noindent$\bullet$ For all $\destination \in \STATIONS$ and $\aday \in \DAYS$, since the
number of positive storage inflows at node $(\destination,\Rday{\aday})$ must
match the number of positive storage outflows at the same node, it follows that
the number of positive storage inflows originating from a source node is equal
to the number of positive storage outflows going to a source node (flow
conservation), that is,
\begin{equation}\label{eq:storage_in_storage_out_destination}
  \sum\limits_{\source \in \SOURCES}
  \SFlow{\Rday{\aday}^-}{\source}{\destination} = \sum\limits_{\source
    \in \SOURCES} \SFlow{\Rday{\aday}}{\destination}{\source} \eqfinp
\end{equation}

\noindent$\bullet$ For all $\destination \in \STATIONS$ and $\aday \in \DAYS$, if there is
a storage inflow originating from a source at node
$(\destination,\Rday{\aday}^-)$, then the hydrogen inflow
$\HFlow{\Rday{\aday}^-}{\destination}{\destination}$ will depart to a source node,
that is,
\begin{align}\label{eq:swap_destination}
  \HFlow{\Rday{\aday}^-}{\destination}{\destination} =
  \findi{\na{0}}\bp{\sum\limits_{\source \in \SOURCES} \SFlow{\Rday{\aday}^-}{\source}{\destination}}
  \HFlow{\Rday{\aday}^-}{\destination}{\destination} + \sum\limits_{\source \in \SOURCES}
  \HFlow{\Rday{\aday}}{\destination}{\source} \eqfinp
\end{align}

\noindent$\bullet$ For all $\destination \in \STATIONS$ and $\aday \in \DAYS$, the
hydrogen outflow at node $\TEGnode{\Rday{\aday}}{\destination}$,
$\HFlow{\Rday{\aday}}{\destination}{\destination}$, is given by
\begin{align}\label{eq:demand_satisfaction2}
  \HFlow{\Rday{\aday}}{\destination}{\destination}
  =
  \sum\limits_{\source \in \SOURCES} \HFlow{\Rday{\aday}^-}{\source}{\destination}
  &+ \findi{\na{0}}
  \bp{\sum\limits_{\source \in \SOURCES} \SFlow{\Rday{\aday}^-}{\source}{\destination}}
  \HFlow{\Rday{\aday}^-}{\destination}{\destination}  \nonumber
 \\  &- \nodeV{z}{\Rday{j}}{\destination} \eqfinp
\end{align}
Indeed, if there is a positive hydrogen inflow at node
$\TEGnode{\Rday{\aday}}{\destination}$ from a node $(\source',\Rday{\aday}^-)$,
we have that
$\sum\limits_{\source \in \SOURCES} \SFlow{\Rday{\aday}^-}{\source}{\destination}=1$
(as the sum is bounded using
Equation~\eqref{eq:at_most_one_storage_destination}), which means that a swap
happens, that is,
$\HFlow{\Rday{\aday}}{\destination}{\destination} = \sum\limits_{\source \in
  \SOURCES} \HFlow{\Rday{\aday}^-}{\source}{\destination}-
\nodeV{z}{\Rday{j}}{\destination}=
\HFlow{\Rday{\aday}^-}{\source'}{\destination} -
\nodeV{z}{\Rday{j}}{\destination}$.  Otherwise, if
$\sum\limits_{\source \in \SOURCES} \SFlow{\Rday{\aday}^-}{\source}{\destination}=0$,
we have, using Equation~\eqref{flow_arc_bis}, that
$\sum\limits_{\source \in \SOURCES} \HFlow{\Rday{\aday}^-}{\source}{\destination}=0$
and that
$\HFlow{\Rday{\aday}}{\destination}{\destination}=
\HFlow{\Rday{\aday}^-}{\destination}{\destination} -
\nodeV{z}{\Rday{j}}{\destination}$.

\noindent$\bullet$ The swap time at node $(\destination,\Rday{\aday})$ equals the
departure time $\hour_0$ of the storage flow plus the travel time from the
source node to the destination node, that is, for all $\source \in \SOURCES$ for
all $\destination \in \STATIONS$ and for all $\aday \in \DAYS$, we have
\begin{equation}\label{eq:swap_time}
  \SFlow{\Rday{\aday}^-}{\source}{\destination} =  1
  \implies \SwapV{\aday}{\source}{\destination} = \hour_0 + g(\source,\destination)
  \eqfinp
\end{equation}

\noindent$\bullet$ If no swap occurs at the node, the division of the day $\aday$ into
its first part $\Rday{\aday}^-$ and second part $\Rday{\aday}$ at the same node
is made arbitrarily. To reduce symmetry, we set the swap variable to the 
predetermined value $(\overline{h}+\underline{h}+1)/2$, that is, for all
$\destination \in \STATIONS$ and $\aday \in \DAYS$ we have
\begin{equation}\label{eq:forced_swap_time}
  \sum\limits_{\source \in \SOURCES} \SFlow{\Rday{\aday}^-}{\source}{\destination}
  =  0 \implies \SwapV{\aday}{\source}{\destination} =
  \frac{\overline{h}+\underline{h}+1}{2}
  \eqfinp %\eqsepv \forall \destination \in \STATIONS \eqsepv  \forall \aday \in \DAYS  \eqfinp
\end{equation}

\noindent$\bullet$ For all $\destination \in \STATIONS$ and $\aday \in \DAYS$, the total
demand $\nodeV{\Demand}{\Rday{\aday}}{\destination}$ at node $(\demand,\Rday{\aday})$ is given by
\begin{equation}\label{eq:demand_destination2}
  \nodeV{\Demand}{\Rday{\aday}}{\destination}=
  C_{\aday,\destination,\overline{h}} - \nodeV{\Demand}{\Rday{\aday}^-}{\destination} \eqfinv
\end{equation}
where $ C_{\aday,\destination,\overline{h}}$ is the total demand of day $\aday$
at destination $\destination$. From Assumption~\ref{Adeux}, we have that
\[C_{\aday,\destination,\overline{h}} \leq \StockMax \eqfinp \]

\noindent$\bullet$ For all $\destination \in \STATIONS$ and $\aday \in \DAYS$, the
quantity $\nodeV{z}{\Rday{\aday}}{\destination}$ used to satisfy the demand at
node $(\demand,\Rday{\aday})$, is equal to the minimum between the inflow at the
same node and demand, that is,
\begin{equation}\label{eq:emtying_bounded2}
  \nodeV{z}{\Rday{\aday}}{\destination}
  =\min( \HFlow{\Rday{\aday}^-}{\destination}{\destination}, \nodeV{\Demand}{\Rday{\aday}}{\destination})  \eqfinp
\end{equation}

\subsubsection{Constraints at nodes in $\SOURCES {\times} \LDAYS$}\label{constraints_node_source_1}

\noindent$\bullet$ For all $\source \in \SOURCES$ and for all $\aday \in \DAYS$, the
number of positive storage inflows at node $(\source,\Lday{\aday})$ is bounded
by the maximum number of storage units that can stay at the source, that is,
\begin{equation}\label{eq:limit_storage_source}
  \sum\limits_{\destination \in \STATIONS} \SFlow{\Lday{\aday}^-}{\destination}{\source}
  + \sum\limits_{k \in \cardinal{\kfun(\source)}} \SFlow{\Lday{\aday}^-}{\source}{k} \leq \kfun(\source) \eqfinp
\end{equation}

\noindent$\bullet$ For all $\source \in \SOURCES$ and $\aday \in \DAYS$, the number of
positive storage inflows at node $(\source,\Lday{\aday})$ is equal to the number
of positive storage outflows at the same node (flow conservation), that is,
\begin{align} \label{eq:storage_sent_leaving_source}
  \sum\limits_{\destination \in \STATIONS}
  &\SFlow{\Lday{\aday}^-}{\destination}{\source}
    + \sum\limits_{k \in\IntegerSet{\kfun(\source)}} \SFlow{\Lday{\aday}^-}{\source}{k}
    = \sum\limits_{\destination \in \STATIONS} \SFlow{\Lday{\aday}}{\source}{\destination}
    \nonumber \\
  &+ \sum\limits_{k \in\IntegerSet{\kfun(\source)}}
    \SFlow{\Lday{\aday}}{\source}{k}
    \eqfinv
\end{align}
the initial quantities
$\{\SFlow{\Rday{0}}{\destination}{\source}\}_{\destination \in \STATIONS}$ and
$\{\SFlow{\Rday{0}}{\source}{k}\}_{k \in\IntegerSet{\kfun(\source)}}$ being given.
        
\noindent$\bullet$ For all $\source \in \SOURCES$ and $\aday \in \DAYS$, the hydrogen flow
conservation equation is satisfied at each node
$\TEGnode{\Lday{\aday}}{\source}$. The hydrogen inflow plus the refill (which is
the quantity of hydrogen purchased at the node), that is, the left-hand side of
Equation~\eqref{eq:flow_conservation_source}, is equal to the hydrogen outflows
at the same node, that is, the right-hand side of
Equation~\eqref{eq:flow_conservation_source}
\begin{align}\label{eq:flow_conservation_source}
  \sum\limits_{\destination \in \STATIONS}
  &\HFlow{\Lday{\aday}^-}{\destination}{\source}
    + \sum\limits_{k \in\IntegerSet{\kfun(\source)}} \HFlow{\Lday{\aday}^-}{\source}{k}
    + \nodeV{r}{\Lday{\aday}}{\source} \nonumber
  \\
  &= \sum\limits_{\destination \in \STATIONS} \HFlow{\Lday{\aday}}{\source}{\destination}
    + \sum\limits_{k \in\IntegerSet{\kfun(\source)}} \HFlow{\Lday{\aday}}{\source}{k} \eqfinv
\end{align}
the initial quantities 
$\{\HFlow{\Rday{0}}{\destination}{\source}\}_{\destination \in \STATIONS}$ and
$\{\HFlow{\Rday{0}}{\source}{k}\}_{k \in\IntegerSet{\kfun(\source)}}$ being given.

\noindent$\bullet$ For all $\source \in \SOURCES$ and $\aday \in \DAYS$, the quantity of
hydrogen purchased at node $(\source,\Lday{\aday})$ is bounded, that is,
\begin{equation}\label{eq:refill_bounded}
  \nodeV{r}{\Lday{\aday}}{\source}
  \leq \overline{r}_\source \eqfinp
\end{equation}
\noindent$\bullet$ For each $\source \in \SOURCES$ and each $\aday \in \DAYS$, the stock
in all storage units present at source $\source$ must not decrease during
the refilling process.  Thus, there must exist a bijection which pairs the
hydrogen inflows and outflows at source node $\TEGnode{\Lday{\aday}}{\source}$
respecting a non decreasing flow constraint. More formally, to encode this
constraint, we fix a source $\source \in \SOURCES$ and a day $\aday \in \DAYS$
and start by defining two vectors of hydrogen and storage inflows
$\{\Flow{f^{\mathsf{in}}}{\Lday{\aday}}{\source}{l} \}_{l \in \MaxArcsSource }$
and
$\{\Flow{y^{\mathsf{in}}}{\Lday{\aday}}{\source}{l} \}_{l \in \MaxArcsSource}$,
where
\begin{equation}
  \MaxArcsSource= \IntegerSet{|\STATIONS| + \kfun(\source)}
  \eqfinv
  \label{eq:MaxArcsSource}
\end{equation}
and two vectors of hydrogen and storage outflows
$\{\Flow{f^{\mathsf{out}}}{\Lday{\aday}}{\source}{l} \}_{l \in \MaxArcsSource
}$ and $\{\Flow{y^{\mathsf{out}}}{\Lday{\aday}}{\source}{l} \}_{l \in
  \MaxArcsSource }$ by the following equations
\begin{subequations} 
  \label{eq:in_out_flow_source}
\begin{align}
  \Flow{f^{\mathsf{in}}}{\Lday{\aday}}{\source}{l}
  &= 
    \begin{cases} 
      \HFlow{\Lday{\aday}^-}{d_l}{\source},  \text{ if } l \leq |\mathbb{D}| \eqfinv\\ 
      \HFlow{\Lday{\aday}^-}{\source}{l - |\STATIONS|},  \text{ otherwise} \eqfinv
    \end{cases}
  \\
  \Flow{f^{\mathsf{out}}}{\Lday{\aday}}{\source}{l}
  &= 
    \begin{cases} 
      \HFlow{\Lday{\aday}}{\source}{d_{l}},  \text{ if } l \leq |\mathbb{D}| \eqfinv \\ 
      \HFlow{\Lday{\aday}}{\source}{l - |\STATIONS|},  \text{ otherwise} \eqfinv
    \end{cases}
  \\
  \label{def:yin}
  \Flow{y^{\mathsf{in}}}{\Lday{\aday}}{\source}{l}
  &= 
    \begin{cases} 
      \SFlow{\Lday{\aday}^-}{d_l}{\source},  \text{ if } l \leq |\mathbb{D}| \eqfinv\\ 
      \SFlow{\Lday{\aday}^-}{\source}{l - |\STATIONS|},  \text{ otherwise} \eqfinv
    \end{cases}
  \\
  \label{def:yout}
  \Flow{y^{\mathsf{out}}}{\Lday{\aday}}{\source}{l}
  &= 
    \begin{cases} 
      \SFlow{\Lday{\aday}}{\source}{d_{l}},  \text{ if } l \leq |\mathbb{D}| \eqfinv \\ 
      \SFlow{\Lday{\aday}}{\source}{l - |\STATIONS|},  \text{ otherwise} \eqfinv
    \end{cases}  
\end{align}
\end{subequations}
for all $l \in \MaxArcsSource$, where $\destination_l$ is the $l$-{th} destination
in the set $\STATIONS$. Now, we formally define the nondecreasing stock
constraint. Let $\PermutationBijectionDomain_{\source,\lday}$ be the set of
active incoming arcs at node $\TEGnode{\Lday{\aday}}{\source}$, that is
$\PermutationBijectionDomain_{\source,\lday} = \nset{l}{
  \Flow{y^{\mathsf{in}}}{\Lday{\aday}}{\source}{l}=1 }$, and let
$\PermutationBijectionCoDomain_{\source,\lday}$ be the set of active outgoing
arcs at node $\TEGnode{\Lday{\aday}}{\source}$, that is
$\PermutationBijectionCoDomain_{\source,\lday} = \nset{l}{
  \Flow{y^{\mathsf{out}}}{\Lday{\aday}}{\source}{l}=1 }$.  Note that, the two
sets $\PermutationBijectionDomain_{\source,\lday}$ and
$\PermutationBijectionCoDomain_{\source,\lday}$ have same cardinality as
using~\eqref{def:yin}, and~\eqref{def:yout}
and~\eqref{eq:storage_sent_leaving_source} we have 
\begin{align*}
  \cardinal{\PermutationBijectionDomain_{\source,\lday}}=
  \sum\limits_{l \in \MaxArcsSource} \Flow{y^{\mathsf{in}}}{\Lday{\aday}}{\source}{l}
  =
  \sum\limits_{l \in \MaxArcsSource} \Flow{y^{\mathsf{out}}}{\Lday{\aday}}{\source}{l}
  = \cardinal{\PermutationBijectionCoDomain_{\source,\lday}}
  \eqfinp
\end{align*}
The nondecreasing
stock constraint is satisfied if there exists a bijection
$\bijection{\Lday{\aday}}{\source}: \PermutationBijectionDomain_{\source,\lday} \to
\PermutationBijectionCoDomain_{\source,\lday}$ such that
\begin{equation}\label{eq:permutation_constraints}
  \Flow{f^{\mathsf{in}}}{\Lday{\aday}}{\source}{l} \leq
  \Flow{f^{\mathsf{out}}}{\Lday{\aday}}{\source}{\bijection{\Lday{\aday}}{\source}(l)}
 \eqsepv \forall l \in \PermutationBijectionDomain_{\source,\lday} \eqfinp
\end{equation}

\noindent$\bullet$ For all $\source \in \SOURCES$ and $\aday \in \DAYS$, we reduce
symmetry in storage flows by imposing to the vector
$\na{\SFlow{\Lday{\aday}}{\source}{k}}_{k \in\IntegerSet{\kfun(\source)}}$ to be
nonincreasing, that is, when $\kfun(\source)\geq2$, we add for all
$k \in [\kfun(\source)-1]$ the constraint
\begin{equation}\label{eq:forced_arc_source}
  \SFlow{\Lday{\aday}}{\source}{k+1}=1 \implies \SFlow{\Lday{\aday}}{\source}{k}=1 \eqfinp
\end{equation}

\subsubsection{Constraints at nodes in $\SOURCES {\times} \RDAYS$}
There is no refill at nodes $\TEGnode{\Rday{\aday}}{\source}$ as stated in
Assumption~\ref{Asept}, that is, for all $\source \in \SOURCES$ and
$\aday \in \DAYS$
\begin{subequations}\label{eq:no_refill_day}
  \begin{align}
    \HFlow{\Rday{\aday}^-}{\source}{k} &=
                                         \HFlow{\Rday{\aday}}{\source}{k}
                                         \eqsepv \forall k \in\IntegerSet{\kfun(\source)} \eqfinv  \\
    \SFlow{\Rday{\aday}^-}{\source}{k} &=
                                         \SFlow{\Rday{\aday}}{\source}{k}
                                         \eqsepv \forall k \in\IntegerSet{\kfun(\source)} \eqfinp
  \end{align}
\end{subequations}  

\subsection{Cost function}
We aim at minimizing a cost function which is the addition of four
intertemporal costs, namely, a transport cost
\begin{subequations}
\begin{align}
  L^{\tr}(\Storageflow)
  &=c^{\tr} \Big( \sum_{(p_1,p_2,i) \in \LARCS\cup\RARCS}
    t_{(p_1,p_2)}{\times}\SFlow{i}{p_1}{p_2} \Big)
    \eqfinv
    \intertext{a refill cost}
  L^{\re}(r)
  &= \sum_{(\source, \aday) \in \SOURCES{\times}\DAYS}
    c^{\re}_{\source}\nodeV{r}{\Lday{\aday}}{\source}
    \eqfinv
    \intertext{a variable demand dissatisfaction cost}
    L^{\Dvdis}(z)
  &=  c^{\Dvdis} \Big( \sum_{(\destination,\aday) \in \STATIONS{\times}\DAYS}
    \big(\nodeV{\Demand}{\Lday{\aday}}{\destination}
    +\nodeV{\Demand}{\Rday{\aday}}{\destination} 
  \nonumber \\
  &\hspace{2.0cm}
    - (\nodeV{z}{\Lday{\aday}}{\destination}
    + \nodeV{z}{\Rday{\aday}}{\destination})
    \big) \Big)
    \eqfinv
    \intertext{and a fixed demand dissatisfaction cost}
    L^{\Dfdis}(z)
  &=c^{\Dfdis} \Big( \sum_{(\destination,\aday) \in \STATIONS{\times}\DAYS}
     \findi{>0}\big(\nodeV{\Demand}{\Lday{\aday}}{\destination}
    +\nodeV{\Demand}{\Rday{\aday}}{\destination}
  \nonumber \\
  &\hspace{2.0cm}
    - (\nodeV{z}{\Lday{\aday}}{\destination}
    + \nodeV{z}{\Rday{\aday}}{\destination})
    \big) \Big)
    \eqfinv
\end{align}
\end{subequations}
where $t_{(p_1,p_2)}$ is the transport time from $p_1$ to $p_2$, $c^{\tr}$ is
the unit transport cost (\euro/km), $c^{\re}_\source$ is the unit purchase
price of hydrogen (\euro/kg) (refilling price) at source $\source$, $c^{\Dvdis}$ is the variable unit
dissatisfaction cost (\euro/kg) and $c^{\Dfdis}$ is the fixed
dissatisfaction cost (\euro).

\section{Problem formulation and resolution using MILP and heuristics}
\label{se:formulation_solving}
\subsection{Problem formulation}

Gathering all that has been done in Sect.~\ref{sec:modeling}, we 
formulate the following optimization problem as follows
\begin{align}\label{flow_model_problem_formulation}
 \min_{\Storageflow,\Hydrogenflow,r,z, \swap,\sigma} \nonumber
  &L^{\tr}(\Storageflow) + L^{\re}(r)
  \\
  &\hphantom{L^{\tr}} + L^{\Dvdis}(z) +  L^{\Dfdis}(z) 
  \\
  &\text{s.t. } \eqref{eq:flow_arc}{-}\eqref{eq:no_refill_day}
    \nonumber
    \eqfinv
\end{align}
where $\Storageflow$ and $\Hydrogenflow$ are the flow decision variables defined
in Table~\ref{table:flow_decisions} and $r$, $z$, $\swap$ are respectively the
refilling decision at the sources and the emptying and swap decisions at the
destinations as defined in Table~\ref{table:node_decisions}. We denote by
$\mathrm{val}(\mathcal{P})$ the optimal value of
Problem~\eqref{flow_model_problem_formulation}.

\begin{lemma}\label{lemma:linearization}
  The optimization problem~\eqref{flow_model_problem_formulation}, which
  includes nonlinear constraints—specifically \eqref{eq:flow_arc},
  \eqref{eq:emptying_bounded1}, \eqref{eq:swap_destination},
  \eqref{eq:demand_satisfaction2}, \eqref{eq:swap_time},
  \eqref{eq:emtying_bounded2}, \eqref{eq:permutation_constraints},
  \eqref{eq:forced_arc_source}— is equivalent to a MILP, in the sense that the value of the two problems
  coincide and solutions to either problem can be derived from one another.
\end{lemma}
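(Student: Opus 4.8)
The plan is to prove the equivalence constructively: I go through the eight constraints flagged as nonlinear one at a time, replace each by a finite set of linear constraints (introducing auxiliary binary and continuous variables when needed), and then argue that the resulting program is a MILP whose feasible set is in value-preserving correspondence with that of~\eqref{flow_model_problem_formulation}. Since every added variable is either binary or a bounded continuous variable, and every substituted constraint is linear, the conclusion ``the program is a MILP'' will be immediate; the content is entirely in checking that each substitution is \emph{exact}, i.e. that no feasible point is lost or created and that the objective value is unchanged. Concretely, I would prove two inclusions: from any feasible tuple of~\eqref{flow_model_problem_formulation} I build a feasible point of the MILP with the same cost, and conversely, so that $\mathrm{val}(\mathcal P)$ equals the MILP optimum and optimizers map to one another.

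The routine substitutions I would dispatch first. Constraint~\eqref{eq:flow_arc}, combining~\eqref{flow_arc_1} and~\eqref{flow_arc_bis}, is captured exactly by the single big-$M$ inequality $\Hydrogenflow_a \le \StockMax\,\Storageflow_a$ for all $a\in\ARCS^{\TEG}$, using $\Hydrogenflow_a\in[0,\StockMax]$ and $\Storageflow_a\in\na{0,1}$. Constraint~\eqref{eq:forced_arc_source} becomes the linear ordering $\SFlow{\Lday{\aday}}{\source}{k{+}1}\le \SFlow{\Lday{\aday}}{\source}{k}$. For the swap-time implications~\eqref{eq:swap_time} and~\eqref{eq:forced_swap_time}, I use that $\SwapV{\aday}{\source}{\destination}\in\HOUR$ is bounded and that $\sum_{\source}\SFlow{\Rday{\aday}^-}{\source}{\destination}\le 1$ by~\eqref{eq:at_most_one_storage_destination}: each implication is an indicator constraint on a binary, so a two-sided big-$M$ pair $\pm\bp{\SwapV{\aday}{\source}{\destination}-(\hour_0+g(\source,\destination))}\le M\bp{1-\SFlow{\Rday{\aday}^-}{\source}{\destination}}$, and similarly for~\eqref{eq:forced_swap_time}, reproduces them exactly. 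The minimum constraints~\eqref{eq:emptying_bounded1} and~\eqref{eq:emtying_bounded2}, of the form $z=\min(a,b)$ with $a,b\in[0,\StockMax]$, I linearize exactly (not by relaxation, to avoid relying on the sign of the cost coefficients) via $z\le a$, $z\le b$, $z\ge a-M\theta$, $z\ge b-M(1-\theta)$ with a fresh binary $\theta$ and $M=\StockMax$.

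The two constraints~\eqref{eq:swap_destination} and~\eqref{eq:demand_satisfaction2} contain the indicator $\findi{\na 0}\bp{\sum_{\source}\SFlow{\Rday{\aday}^-}{\source}{\destination}}$ multiplied by the continuous flow $\HFlow{\Rday{\aday}^-}{\destination}{\destination}$. Here I would use that the sum $w=\sum_{\source}\SFlow{\Rday{\aday}^-}{\source}{\destination}$ is binary-valued (again by~\eqref{eq:at_most_one_storage_destination}), so $\findi{\na 0}(w)=1-w$ is linear, and the remaining product $(1-w)\,\HFlow{\Rday{\aday}^-}{\destination}{\destination}$ is a binary-times-bounded-continuous term that I replace by an auxiliary variable $u$ with the standard McCormick/big-$M$ envelope $u\le \HFlow{\Rday{\aday}^-}{\destination}{\destination}$, $u\le \StockMax(1-w)$, $u\ge \HFlow{\Rday{\aday}^-}{\destination}{\destination}-\StockMax\,w$, $u\ge 0$, which forces $u$ to equal the product. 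The objective term $L^{\Dfdis}$ with $\findi{>0}$ is handled by an indicator binary $\xi$ with $\Demand-z\le M\xi$; since its cost coefficient $c^{\Dfdis}$ is nonnegative, minimization drives $\xi$ down to $\findi{>0}(\Demand-z)$, so the optimal value is unchanged.

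The main obstacle is constraint~\eqref{eq:permutation_constraints}, the \emph{existence} of an order-respecting bijection $\bijection{\Lday{\aday}}{\source}$ between the active inflow and active outflow index sets. My plan is to replace the decision variable $\sigma$ by binary assignment variables $x_{l,m}$, $l,m\in\MaxArcsSource$, with the constraints $\sum_{m} x_{l,m}=y^{\mathsf{in}}_l$, $\sum_{l} x_{l,m}=y^{\mathsf{out}}_m$, and the order condition $f^{\mathsf{in}}_l\le f^{\mathsf{out}}_m+M\,(1-x_{l,m})$ with $M=\StockMax$. The equal cardinality of the active sets (established in the excerpt from~\eqref{def:yin}, \eqref{def:yout} and~\eqref{eq:storage_sent_leaving_source}) guarantees that these degree constraints force the support of $x$ to be a perfect matching between active inflows and active outflows, i.e. exactly a bijection $\PermutationBijectionDomain_{\source,\lday}\to\PermutationBijectionCoDomain_{\source,\lday}$; the last inequality makes $x_{l,m}=1$ possible only when $f^{\mathsf{in}}_l\le f^{\mathsf{out}}_m$. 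The delicate point to verify is the \emph{equivalence of feasibility}: a valid bijection exists if and only if this assignment system admits a feasible $x$, which follows because any admissible $\sigma$ yields $x_{l,\sigma(l)}=1$ and, conversely, any feasible $x$ (integral by construction) defines such a $\sigma$. Collecting all substitutions, the objective and every constraint are now affine in the original and auxiliary variables, all of which are binary or bounded continuous; hence the transformed problem is a MILP, its feasible set projects bijectively onto that of~\eqref{flow_model_problem_formulation} with identical cost, and the two optimal values coincide, which establishes the lemma.
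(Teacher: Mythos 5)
Your proposal is correct and follows essentially the same route as the paper's Appendix~\ref{appendix:linearization}: big-$M$ inequalities for the implication constraints, a binary-selector linearization of the $\min$ in~\eqref{eq:emptying_bounded1} and~\eqref{eq:emtying_bounded2}, the McCormick envelope for the binary-times-continuous products in~\eqref{eq:swap_destination} and~\eqref{eq:demand_satisfaction2}, and, for the key constraint~\eqref{eq:permutation_constraints}, exactly the paper's assignment-matrix formulation with row/column sums tied to $y^{\mathsf{in}}$ and $y^{\mathsf{out}}$ plus the big-$M$ order inequality. Your additional treatment of the $\findi{>0}$ term in $L^{\Dfdis}$ (which the paper's appendix does not discuss) is a welcome extra and is handled correctly via a value-preserving indicator variable.
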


\begin{proof}
  See Appendix~\ref{appendix:linearization} for the proof.
\end{proof}

Solving the MILP reformulation of Problem~\eqref{flow_model_problem_formulation}
is difficult for instances with large number of destinations. This is due to the
quadratic growth of the number of variables needed to linearize the nondecreasing stock
constraints~\eqref{eq:permutation_constraints} with respect to the number of destinations. Therefore, using a solver to
directly tackle Problem~\eqref{flow_model_problem_formulation} may lead to poor
performances. In \S\ref{hympulsion_medium_sized}, we outline the method for
solving Problem~\eqref{flow_model_problem_formulation} up to medium-sized
instances, and in \S\ref{hympulsion_large_sized}, we present a heuristic for
tackling large-size instances.

\subsection{Resolution using MILP and heuristics}

In~\S\ref{hympulsion_medium_sized}, we present the resolution of Problem~\eqref{flow_model_problem_formulation} using a MILP solver, referred to as the \algname{MA} method. In~\S\ref{hympulsion_large_sized}, we introduce a two-step heuristic approach, that we call the \algname{RH} method, to address Problem~\eqref{flow_model_problem_formulation}, and in~\S\ref{se:greedy_heuristic}, we detail a greedy heuristic, called the \algname{GH} method, for solving the same problem. The \algname{GH} method is an improved version of the heuristic employed by the industrial partner for the problem and will serve as a baseline for comparison against \algname{MA} and \algname{RH}.

In all the three presented algorithms a MILP solver, specifically
  Gurobi, is used as a core ingredient. It is executed with a predefined time limit,
  which is the same for all the three algorithms, to prevent excessive
  computational time and ensure practical applicability of the proposed algorithms, and to ensure a fair comparison of the three algorithms. It is important to note that the time limit imposed on the two-step heuristic is only applied to its first step, as the second step executes rapidly, as detailed in the corresponding section. 

\subsubsection{MILP resolution of Problem~\eqref{flow_model_problem_formulation}}
\label{hympulsion_medium_sized}
Problem~\eqref{flow_model_problem_formulation}, which is equivalent to a MILP (see Lemma~\ref{lemma:linearization}), is solved using a MILP solver
(Gurobi), without employing any additional techniques or improvements. For
medium-sized instances, Gurobi demonstrates strong performance and produces
high-quality results. However, its effectiveness diminishes when applied to
large-scale instances. The medium-sized instances considered in this study
include up to 35 destinations. In what
follows, we will refer to the direct use of Gurobi to solve
Problem~\eqref{flow_model_problem_formulation} as \algname{MA}.

\subsubsection{Resolution of Problem~\eqref{flow_model_problem_formulation} using a two-step heuristic}
\label{hympulsion_large_sized}
Using a MILP solver to directly tackle
Problem~\eqref{flow_model_problem_formulation} may lead to poor performances for
large instances due to the nondecreasing stock
constraint~\eqref{eq:permutation_constraints}, which is highly combinatorial.

For that reason, when considering large instances of
Problem~\eqref{flow_model_problem_formulation}, we rely on a heuristic, that we
call \algname{RH}. Before describing this heuristic, we consider two additional
optimization problems. The first one is the same optimization problem as
Problem~\eqref{flow_model_problem_formulation}, but without the permutation
constraint~\eqref{eq:permutation_constraints}, that is,

\begin{align}\label{flow_model_problem_formulation_without_permutation} 
  \min_{\Storageflow,\Hydrogenflow,r,z, \swap,\sigma} \nonumber
  &L^{\tr}(\Storageflow) + L^{\re}(r)
  \\
  &\hphantom{L^{\tr}} + L^{\Dvdis}(z) +  L^{\Dfdis}(z) 
  \\
  &\text{s.t. } \eqref{eq:flow_arc}{-}\eqref{eq:refill_bounded},\eqref{eq:forced_arc_source},\eqref{eq:no_refill_day} 
    \nonumber
    \eqfinp
\end{align}
We denote by $\mathrm{val}(\widetilde{\mathcal{P}})$ the optimal value of
Problem~\eqref{flow_model_problem_formulation_without_permutation}. Since the
feasible set of Problem~\eqref{flow_model_problem_formulation} in included in
feasible set of Problem~\eqref{flow_model_problem_formulation_without_permutation}, we have that
\begin{equation}
  \mathrm{val}(\mathcal{\widetilde{P}}) \leq \mathrm{val}(\mathcal{P}) \eqfinp
\end{equation}
Consequently, a lower bound of
Problem~\eqref{flow_model_problem_formulation_without_permutation} is also a
lower bound for Problem~\eqref{flow_model_problem_formulation}.

The second optimization problem is an equivalent formulation of
Problem~\eqref{flow_model_problem_formulation}, which is given by
\begin{align}\label{reorganization_flow_problem}
\min_{\Storageflow}  &\: L^{\tr}(\Storageflow) + \phi(\Storageflow)
  \\
  &\text{s.t. }
    \eqref{eq:always_storage_destination},
    \eqref{eq:at_most_one_storage_destination},
    \eqref{eq:storage_in_storage_out_destination},
    \eqref{eq:limit_storage_source},
    \eqref{eq:storage_sent_leaving_source},
    \eqref{eq:forced_arc_source}
    \nonumber \eqfinv
\end{align}
where the function $\phi(\Storageflow)$ is the optimal value of 
\begin{align}\label{hympulsion_flow_subproblem}
\min_{\Hydrogenflow,r,z, \swap, \sigma}
    &L^{\re}(r) + L^{\Dvdis}(z) +  L^{\Dfdis}(z)
  \\
  &\text{s.t. }
    \eqref{eq:flow_arc},
    \eqref{eq:demand_satisfaction1}{-}%
    % \eqref{eq:demand_destination1},
    \eqref{eq:emptying_bounded1},
    \eqref{eq:swap_destination}{-}%
    %\eqref{eq:demand_satisfaction2},
    %\eqref{eq:swap_time},
    %\eqref{eq:forced_swap_time},
    %\eqref{eq:demand_destination2},
    \eqref{eq:emtying_bounded2},
  \nonumber\\
  &\hphantom{\text{s.t. }}
    \eqref{eq:flow_conservation_source}{-}
    %\eqref{eq:refill_bounded},
    %\eqref{eq:in_out_flow_source},
    \eqref{eq:permutation_constraints},
    \eqref{eq:no_refill_day}
    \nonumber \eqfinp
\end{align}
We also denote by $\gamma(\Storageflow)$ the optimal solution of
Problem~\eqref{hympulsion_flow_subproblem}. Given a feasible storage flow
$\Storageflow$, that is, a storage flow $\Storageflow$ satisfying the
constraints of Problem~\eqref{reorganization_flow_problem}, computing the value
of $\phi(\Storageflow)$ amounts to solve
Problem~\eqref{hympulsion_flow_subproblem} on the subgraph of $\GRAPH^{\TEG}$
generated by active arcs of $\Storageflow$, which is a MILP with much fewer
integer variables than in Problem~\eqref{flow_model_problem_formulation} as we
only consider the nondecreasing stock
constraint~\eqref{eq:permutation_constraints} on this subgraph. As a
consequence, computing $\phi(\Storageflow)$ is tractable even for large-size
instances.

Now, we detail the \algname{RH} method to solve large-sized instances of
Problem~\eqref{reorganization_flow_problem}. First, note that solving
Problem~\eqref{flow_model_problem_formulation_without_permutation} is easier
then Problem~\eqref{flow_model_problem_formulation} since it does not include
the nondecreasing stock constraint~\eqref{eq:permutation_constraints}. Second,
for a feasible solution $\Storageflow$ of
Problem~\eqref{flow_model_problem_formulation_without_permutation}, that is, for
a solution $\Storageflow$ that satisfies
Constraints~\eqref{eq:always_storage_destination},
\eqref{eq:at_most_one_storage_destination},
\eqref{eq:storage_in_storage_out_destination}, \eqref{eq:limit_storage_source},
\eqref{eq:storage_sent_leaving_source} and \eqref{eq:forced_arc_source}, solving
Problem~\eqref{hympulsion_flow_subproblem} is easy as previously mentioned, and
the whole solution $(\Storageflow,\gamma(\Storageflow))$ is feasible for the original
Problem~\eqref{flow_model_problem_formulation} as $\Storageflow$ is feasible for
Problem~\eqref{reorganization_flow_problem} and $\gamma(\Storageflow)$ is feasible
for Problem~\eqref{hympulsion_flow_subproblem}. Therefore, the heuristic
approach first solves
Problem~\eqref{flow_model_problem_formulation_without_permutation} with a
computation time limit to obtain a feasible storage flow~$\Storageflow$, and
then, solves Problem~\eqref{hympulsion_flow_subproblem} to obtain a feasible
solution for Problem~\eqref{flow_model_problem_formulation}. For what
follows, we refer to the lower bound obtained by \algname{RH} as the lower bound
obtained when solving
Problem~\eqref{flow_model_problem_formulation_without_permutation} in the first
step of the heuristic. A short pseudocode for \algname{RH} is given in
Algorithm~\ref{flow_hympulsion_relaxed_heuristic}

Surprisingly, \algname{RH} yields better results compared to using \algname{MA}
for large instances. It also produces tighter lower bounds, which facilitates verifying the near-optimality of the solutions found.
 
\begin{algorithm*}
  \caption{Two-step heuristic (\algname{RH}) for
    Problem~\eqref{flow_model_problem_formulation}}
  \label{flow_hympulsion_relaxed_heuristic}
  \begin{algorithmic}[1]
    \State Solve Problem~\eqref{flow_model_problem_formulation_without_permutation} with a time
    limit to obtain $\Storageflow$
    \State Solve Problem~\eqref{hympulsion_flow_subproblem} to obtain $\gamma(\Storageflow)$
    \State \Return ($\Storageflow, \gamma(\Storageflow)$)
  \end{algorithmic}
\end{algorithm*}

\begin{remark}
  One might consider applying Benders decomposition to
Problem~\eqref{reorganization_flow_problem} by approximating the function $\phi$
using a set of cuts. However, as demonstrated in the numerical results section,
the \algname{RH} method is enough to
obtain high-quality solutions to Problem~\eqref{flow_model_problem_formulation}.
Moreover, the nonconvexity of $\phi$ with respect to
$\Storageflow$, as Problem~\eqref{hympulsion_flow_subproblem} involves binary
variables, makes the application of Benders decomposition more
challenging. Although some techniques have been proposed to handle such
nonconvexity in the context of Benders
decomposition~\cite{BendersDecompoNonConvex}, there is no guarantee that they
will be effective for Problem~\eqref{reorganization_flow_problem}.
\end{remark}
\subsubsection{A greedy heuristic for Problem~\eqref{flow_model_problem_formulation}}\label{se:greedy_heuristic}

We introduce a third approach --- which is an improved version of the heuristic
employed by the industrial partner for the problem ---, that we will call
\algname{GH}, to serve as a baseline for comparison against the methods
described in~\S\ref{hympulsion_medium_sized} and
in~\S\ref{hympulsion_large_sized}. While \algname{GH} is suboptimal, it provides
solutions that are fast to compute.

In Algorithm~\ref{flow_hympulsion_heuristic}, we present the pseudocode of
\algname{GH}, which is composed of two steps. First, it finds a feasible storage
flow $\Storageflow$ to Problem~\eqref{reorganization_flow_problem} and, second,
solves Problem~\eqref{hympulsion_flow_subproblem} (which is a tractable MILP as
already explained) to obtain a feasible solution
$(\Storageflow,\gamma(\Storageflow))$ to the global
Problem~\eqref{flow_model_problem_formulation}.

During the first step, it is assumed that the sources have sufficient refilling
capacity to fully replenish the storages.  More precisely, for a given day
$\aday$, if a storage is sent from a source~$\source$ to a destination $d$, that
is, if $\SFlow{\Lday{\aday}}{\source}{\destination}$ is set to the value $1$,
then the storage is sent full, that is,
$\HFlow{\Lday{\aday}}{\source}{\destination}$ is set to the value
$\overline{\Stock}$.  This assumption may violate
constraints~\eqref{eq:flow_conservation_source} and~\eqref{eq:refill_bounded}
but at the end of the first step, the intermediate hydrogen flow $\Hydrogenflow$
is not kept and it is recomputed when solving
Problem~\eqref{hympulsion_flow_subproblem}.

Now, we detail the first step. We sequentially iterate on the set $\DAYS$ of
days, to build an admissible storage flow $\Storageflow$ together with a
compatible hydrogen flow $\Hydrogenflow$ as follows.

\begin{enumerate}
\item At the start of day $\aday \in \DAYS$, having already computed
  $\Storageflow$ and $\Hydrogenflow$ for the previous days, we compute a subset
  of destinations
  $\STATIONSCRITIC = \nset{\destination \in
    \STATIONS}{\HFlow{\Lday{\aday}^-}{\destination}{\destination} \leq
    \StockCritic}$, referred to as the \emph{critical destinations}, whose
  hydrogen inflow $\HFlow{\Lday{\aday}^-}{\destination}{\destination}$ have
  fallen below a predefined threshold $\StockCritic$. The destinations belonging to the subset
  $\STATIONSCRITIC$ are selected for replenishment and are sorted in ascending
  order with respect to their hydrogen inflow.

\item Then, we iterate on the ordered critical destinations as follows. The
  closest source to the current destination with available storages is selected
  and one of its storages is sent (full, according to
  previous assumption made) to the current destination
  and removed from the available storages. The storage that was already present at the
  destination is returned to the same source and
  will be available the next day.

\item The iteration on the ordered critical destinations for the current day
  stops when all the ordered critical destinations have been served or if there
  is no more available storages at the sources.

\item Finally, the hydrogen flow~$\Hydrogenflow$ for day $\aday$ is updated
  using Equations~\eqref{eq:demand_satisfaction1}
  and~\eqref{eq:demand_satisfaction2}.
\end{enumerate}

\begin{algorithm*}
  \caption{Greedy heuristic (\algname{GH}) for Problem~\eqref{flow_model_problem_formulation}}
  \label{flow_hympulsion_heuristic}
  \begin{algorithmic}[1] \Require parameter $\StockCritic$
    \Comment{the critical stock threshold}
    \For{$\aday = 1,..,J$}

   \For{$\source \in \SOURCES$}
  
  \State $V[\source] \gets \sum\limits_{\destination \in \STATIONS}
  \SFlow{\Rday{\aday-1}}{\destination}{\source} + \sum\limits_{k
    \in\IntegerSet{\kfun(\source)}} \SFlow{\Rday{\aday-1}}{\source}{k}$
  \Comment{\hspace{-0.1cm}number of available storages at $\source$ the day $j$}

  \EndFor
  
  \State $\STATIONSCRITIC \gets \nset{\destination \in
    \STATIONS}{\HFlow{\Lday{\aday}^-}{\destination}{\destination} \leq
    \StockCritic}$ \label{greedy_algo:5} \Comment{Initialize the critical destinations}

  \State Sort $\STATIONSCRITIC$ in ascending order of
  $\HFlow{\Lday{\aday}^-}{\destination}{\destination}$\label{greedy_algo:6}

  \For{$\destination \in \STATIONSCRITIC$}

  \State$s' \gets \argmin\limits_{\substack{\source \in \SOURCES \\
      V[\source] \geq 1}} g(\source,\destination)$
  \hspace{-1.5cm}\label{greedy_algo:8}\Comment{Select the closest source with available storages}

  \If{$s' = \emptyset$}\label{greedy_algo:9} break\EndIf
  
  \State $\SFlow{\Lday{\aday}}{\source'}{\destination} \gets$ 1
  \label{greedy_algo:11}\Comment{Send the full storage from $\source'$ to $\destination$}
  
  \State $V[\source']\gets V[\source']-1$
  \label{greedy_algo:12}\Comment{Decrease the number of available storages at $\source'$}

  \State $\SFlow{\Rday{\aday}}{\destination}{\source'} \gets 1$
  \label{greedy_algo:13}\Comment{Return the storage from $\destination$ back to the same source
    $\source'$}

  \State Update $\Hydrogenflow$ using
  \label{greedy_algo:14}Equations~\eqref{eq:demand_satisfaction1} and \eqref{eq:demand_satisfaction2}
  
  \EndFor

  \EndFor

  \State Compute $\gamma(\Storageflow)$ \Comment{$\gamma(\Storageflow)$ is
    the optimal solution of Problem~\eqref{hympulsion_flow_subproblem}}
  
  \State \Return ($\Storageflow, \gamma(\Storageflow)$)
\end{algorithmic}
\end{algorithm*}

\section{Computational experiments}\label{se:results}
We aim to analyze the quality of the solutions for
Problem~\eqref{flow_model_problem_formulation} provided by the three methods
previously introduced, namely \algname{MA} (in~\S\ref{hympulsion_medium_sized}), \algname{RH} (in~\S\ref{hympulsion_large_sized}) and \algname{GH} (in~\S\ref{se:greedy_heuristic}).

\subsection{Experimental Setting}

\subsubsection{Instances}
We describe now the set of instances used for numerical tests.
We vary the number of sources, destinations, and storages -- to reflect
both what is found in the current infrastructure and
what will be found in anticipated possible expansions --
together with their characteristics and transport characteristics.

Each instance is characterized by its number of sources, number of destinations,
number of storages, demand profile magnitude and dissatisfaction cost magnitude
for a total of $192$ instances. The difficulty of an instance is mainly driven by its number of locations and more precisely its number of destinations, as it will be discussed later. We describe now more precisely how the different
parameters vary in the instances.

\begin{itemize}
\item Characteristics of the sources 
  \begin{itemize}

  \item The number of sources ranges from 1 to 7, that is $1\leq \cardinal{\SOURCES} \leq 7$.  
  \item The refilling capacity and price at each source are given in
    Table~\ref{tab:source_capacity_price}.
\begin{table}[h!]
\centering
\begin{tabular}{|c|r|r|}
  \hline
  Source & Refilling capacity & Refilling price \\
         & $\overline{r}_{\source}$ & $c^{\re}_{\source}$ \\
  \hline
  $\source_1$ & 1300 & \numprint{9.0} \\ \hline
  $\source_2$ & 1500 & \numprint{8.0} \\ \hline
  $\source_3$ & 1700 & \numprint{8.3} \\ \hline
  $\source_4$ & 1000 & \numprint{8.0} \\ \hline
  $\source_5$ & 1000 & \numprint{8.0} \\ \hline
  $\source_6$ & 800 & \numprint{10.0} \\ \hline
  $\source_7$ & 500 & \numprint{7.0} \\ \hline
\end{tabular}
\caption{Refilling capacity and price of the sources, where $s_l$ is the $l$-th
  source of $\SOURCES$.}
\label{tab:source_capacity_price}
\end{table}

\item The maximum number $\kfun(\source)$ of storages that can be present at
  source $\source$, is equal to 4, for all $\source \in \SOURCES$.
  \end{itemize}
  \item Characteristics of the destinations
    \begin{itemize}
    \item The ratio ${\cardinal{\STATIONS}}/{\cardinal{\SOURCES}}$ ranges
      from \numprint{4.33} to \numprint{8.5}.
    \item Demand follows a three-peaked daily pattern, with peaks at 8:00,
      14:00, and 17:00. On Saturdays and Sundays, demand reduces to 50\% and
      25\%, respectively, of the weekday level.
    \item Each instance is characterized by a demand profile magnitude and a
      demand dissatisfaction cost magnitude. There are two types of the demand
      profile magnitude. For first one, the average weekday demand at each
      destination is \numprint[kg]{85} per day. For the second one, the average
      weekday demand at each destination is \numprint[kg]{130} par day. In
      Figure~\ref{fig:destination_demand_exemple}, we show an example of a
      demand profile for a weekday at a destination with an average daily demand
      of \numprint[kg]{85}. There are also two types of the demand
      dissatisfaction cost magnitude. For the first one, the variable demand
      dissatisfaction cost~$c^{\Dvdis}$ is equal to \numprint{12}\euro/kg, and
      the fixed demand dissatisfaction cost~$c^{\Dfdis}$ is equal to
      \numprint{1500}\euro. For the second one, the variable demand
      dissatisfaction cost, $c^{\Dvdis}$, is equal to \numprint{14}\euro/kg and
      the fixed demand dissatisfaction cost, $c^{\Dfdis}$, is equal to
      \numprint{2500}\euro.
      \item The swap time at the destinations is equal to \numprint[hour]{1}.
      \end{itemize}
    \item Characteristics of the storages
      \begin{itemize}
      \item The maximal stock capacity $\StockMax$ of the storages is equal to \numprint[kg]{300}.
      \item The ratio ${\cardinal{\STORAGES}}/{\cardinal{\STATIONS}}$ ranges from \numprint{1.26} to \numprint{1.5}.
      \item The stock capacity at the beginning of the time span of the storages
        that are located at the source is equal to \numprint[kg]{200}.
      \item The stock capacity at the beginning of the time span of the storages
        that are located at the destinations is equal to \numprint[kg]{200}.
      \end{itemize}
      
    \item Characteristics of the transport
      \begin{itemize}
      \item The distances between the sources and the destinations are randomly
        generated within a range of \numprint[km]{1} to \numprint[km]{123},
        based on the values of the case study. Specifically, for all
        $(p_1, p_2) \in (\SOURCES \times \STATIONS) \cup (\STATIONS \times \SOURCES)$, we have
        that \(1 \leq t_{(p_1, p_2)} = t_{(p_2, p_1)} \leq 123\).
      \item The transport cost $c^{\tr}$ is equal to \numprint{2.25}\euro/km.
      \item The trucks arrive at $h_0=8$ at the sources to transport the storages.
      %\item The truck speed is equal to 75km, which is used to compute the distance in hour between two locations.
        \end{itemize}
\end{itemize}

\begin{figure}[htpp]
  \centering
  \mbox{\includegraphics[width=0.5\textwidth]{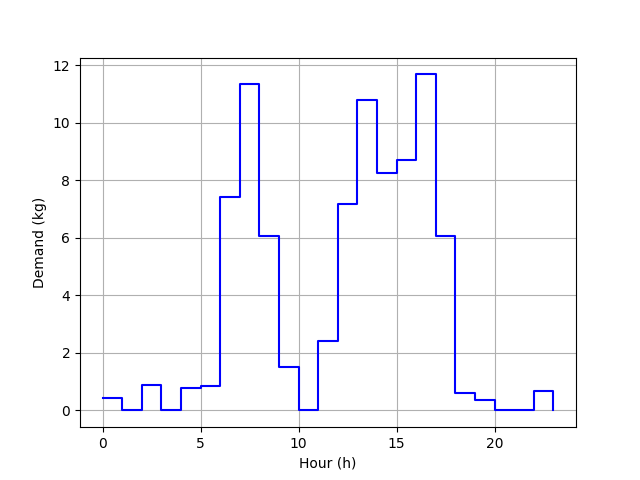}}
  \caption{Example of a demand profile for a weekday at a destination with an average daily demand of \numprint[kg]{85}}
  \label{fig:destination_demand_exemple}
\end{figure}

\subsubsection{Implementation}
The MILP model is implemented in Julia 1.9.2. using JuMP~\cite{JuMP} as the
modeler and Gurobi 11.0~\cite{gurobi} as the MILP solver. All computations were
performed on a Linux system equipped with 4-processor Intel Xeon E5{-}2667,
3.30GHz, with 192 GB of RAM. For each instance and for each algorithm, the
maximum execution time is set to \numprint[min]{20} (\algname{GH} method
finishes way before this time limit)

\subsection{Performance analysis}
\subsubsection{Total cost comparison}
The most straightforward metric for evaluating our algorithms is the cost
function of Problem~\eqref{flow_model_problem_formulation} that is being
minimized. In Figure~\ref{fig:boxplot_total_cost}, we show the cost obtained by
each algorithm over the whole instances summarized as boxplots. The red line
(resp. the purple triangle) indicates the median (resp. the mean) over the
instances, the ends of the boxes the first and third quartiles. The whiskers extend to either 1.5 times the interquartile range (the difference between the third and first quartiles) or the last data point within this range, whichever is closer, and any points beyond are considered outliers.

\begin{figure}[htpp]
  \centering
  \mbox{\includegraphics[width=0.5\textwidth]{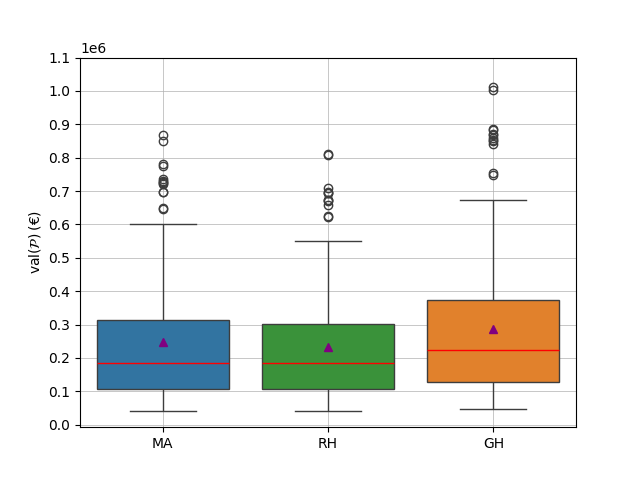}}
  \caption{Cost obtained by each algorithm over the whole instances summarized as boxplots, where the lower the mean (represented by the purple triangle) the better}
  \label{fig:boxplot_total_cost}
\end{figure}

\begin{result}
  The best solution given by the \algname{MA} and \algname{RH} methods is always
  better than the solution returned by the \algname{GH} method for all the
  instances. Moreover, the median (resp. the mean) obtained by the \algname{MA}
  and \algname{RH} methods outperform the median (resp. the mean) obtained by
  the \algname{GH} method by \numprint[\%]{17.7} and \numprint[\%]{17.8}
  (resp. \numprint[\%]{14.1} and \numprint[\%]{18.9}).
\end{result}

% From now on, we will focus exclusively on the MILP and the relaxed heuristic, as these two methods outperform the greedy heuristic.

\subsubsection{Total cost comparison per number of destinations}
As shown in Figure~\ref{fig:boxplot_total_cost}, there is a slight difference
between the cost distribution returned by \algname{MA} and by \algname{RH} and,
as previously mentioned, we expect \algname{RH} to perform better on large
instances.  To investigate this, we assign to each instance an indicator, that we call
\emph{Q\_destination}, based on its number of destinations. This indicator takes four distinct values, ranging from \( Q_1 \) to \( Q_4 \), following the classification criteria outlined in Table~\ref{tab:indicator_quartile}. For example, an instance is assigned the value \( Q_2 \) if its number of destinations falls within the range of 19 to 27. Note that the threshold values in Table~\ref{tab:indicator_quartile} are selected to ensure an equal distribution of instances across all the indicator values, resulting in exactly 48 instances per category.

\begin{table}[h]
    \centering
    \begin{tabular}{|c|c|}
        \hline
        Q\_destination & $\cardinal{\STATIONS}$ \\
        \hline
        $Q_1$ & [10,18] \\
        \hline
        $Q_2$ & [19,27] \\
        \hline
        $Q_3$ & [28,35] \\
        \hline
        $Q_4$ & [36,48] \\
        \hline
    \end{tabular}
    \caption{Threshold values defining the classification of instances according to their number of destinations.}

    \label{tab:indicator_quartile}
  \end{table}
  
%this
%indicator takes the value \(Q_1\) if the number of destinations of the instance
%belongs to the 25\% of instances with the lowest number of destinations.

%is less than one-fourth of the maximum number of destinations over all the
%instances. More precisely, if $\cardinal{\STATIONS_i}$ is the number of
%destinations of instance $i \in \mathcal{I}$, this indicator takes the value
%$Q_1$ if the instance $i$ is in the set
%$\nset{i \in \mathcal{I}}{\frac{\cardinal{\STATIONS_i}}{\max_{i'}\cardinal{\STATIONS_{i'}}}
%  \leq 0.25}$. Similarly, the indicator takes the value \(Q_2\) if the number of
%destinations of the instance is between one-fourth and one-half of the maximum,
%that is, if the instance belongs to the set
%$\nset{i \in \mathcal{I}}{0.25 <
%  \frac{\cardinal{\STATIONS_i}}{\max_{i'}\cardinal{\STATIONS_{i'}}} \leq 0.5}$, and
%  so forth.

In Figure~\ref{fig:boxplot_cost_quartile_destination}, we show the
cost obtained by each algorithm over the whole instances summarized as boxplots,
with an additional partitioning using the indicator Q\_destination.

\begin{figure}[htpp]
\centering
\mbox{\includegraphics[width=0.5\textwidth]{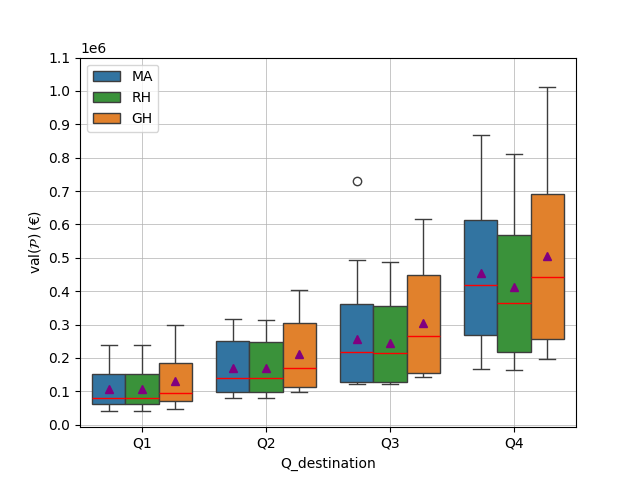}}
  \caption{Cost obtained by each algorithm over the whole instances summarized as boxplots, along with an additional partitioning using the indicator Q\_destination, where a higher value of this indicator corresponds to a higher number of destinations}
  \label{fig:boxplot_cost_quartile_destination}
\end{figure}

\begin{result}
  The \algname{MA} and \algname{RH} methods show comparable performance for the first three values of the indicator Q\_destination, that is $Q_1$, $Q_2$ and $Q_3$. However, for the value $Q_4$, the median
  (resp. the mean) obtained by the \algname{RH} algorithm outperforms the median
  (resp. the mean) obtained by the \algname{MA} algorithm by \numprint[\%]{12.7}
  (resp. \numprint[\%]{9}).
\end{result}

\subsubsection{Relative gap comparison per number of destinations}
For a given algorithm, the relative gap of an instance is defined as the difference between an upper bound given by the value of the best solution and a lower bound returned by the algorithm, divided by the same upper bound.

We show in Figure~\ref{fig:boxplot_gap_quartile_destination} the relative gap
obtained by \algname{MA} and \algname{RH} algorithms over the whole instances
summarized as boxplots, with an additional partitioning using the indicator
Q\_destination.

\begin{figure}[htpp]
  \centering
  \mbox{\includegraphics[width=0.5\textwidth]{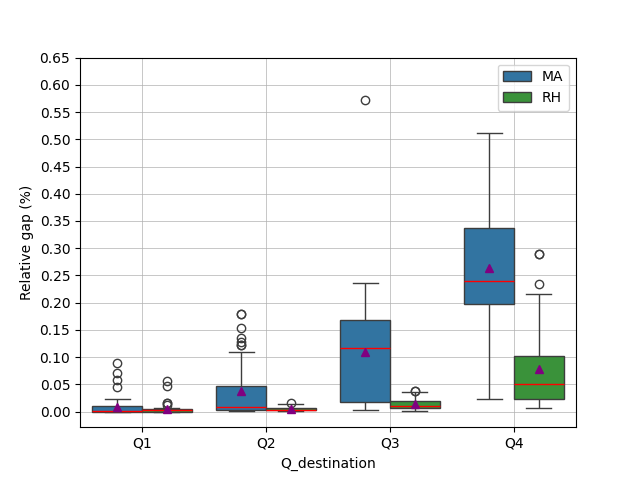}}
  \caption{Relative gap obtained by \algname{MA} and \algname{RH} algorithms over the whole instances summarized as boxplots, along with an additional partitioning using the indicator Q\_destination given in Table~\ref{tab:indicator_quartile}}
  \label{fig:boxplot_gap_quartile_destination}
\end{figure}

\begin{result}
  The relative gap of the \algname{MA} algorithm increases significantly as the
  number of destinations grows, reducing its ability to effectively determine
  the quality of the solution. This relative gap for $Q_4$ is equal
  to \numprint[\%]{23.9} in median and \numprint[\%]{26.4} in average. In
  contrast, the lower bounds found using the \algname{RH} algorithm are tighter, with a relative gap for $Q_4$ equals to \numprint[\%]{5} in median and \numprint[\%]{7.7} in average.
\end{result}

\subsubsection{Relative gap comparison according to the demand satisfaction}
For each instance, we introduce an indicator called \emph{S\_demand}, which
takes the value ``yes" if all the demands at the destinations are satisfied, and
``no" otherwise. Then, we present in
Figure~\ref{fig:boxplot_gap_is_demand_satisfied} the relative gap obtained by
\algname{MA} and \algname{RH} algorithms over the whole instances summarized as
boxplots, with an additional partitioning using the indicator S\_demand.

\begin{figure}[htpp]
  \centering
  \mbox{\includegraphics[width=0.5\textwidth]{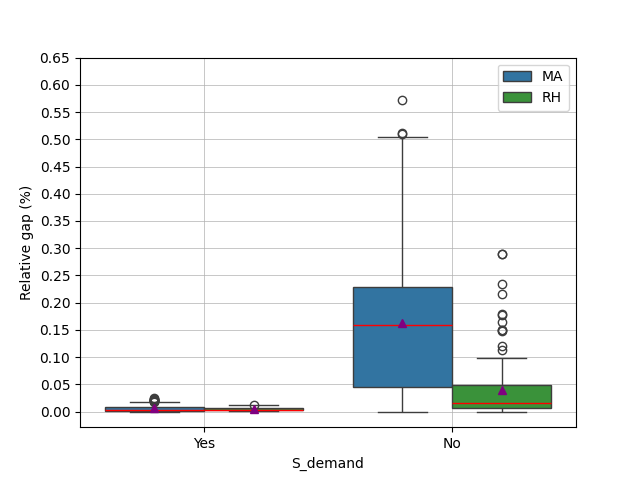}}
  \caption{Relative gap obtained by \algname{MA} and \algname{RH} algorithms over the whole instances summarized as boxplots, along with an additional partitioning using the indicator S\_demand}
  \label{fig:boxplot_gap_is_demand_satisfied}
\end{figure}

\begin{result}
  When the best solution found fully satisfies the demand, both algorithms find tight lower bound
  that guarantee the near-optimality of the solution. However, when the demand
  is not fully satisfied, \algname{RH}'s ability to determine whether
  the solution is optimal decreases slightly, while the relative gap produced by
  \algname{MA} increases significantly. In other words, if the \algname{MA}'s best solution
  fails to satisfy all the demands, near-optimality of the solution is not
  guaranteed.
\end{result}

\section{From flow to storage transport planning}
\label{sec:flow_to_planning}

In this section, we show in Proposition~\ref{prop:tp} how to derive a transport planning for all
storages $\storage \in \STORAGES$ given a solution of
Problem~\eqref{flow_model_problem_formulation}. In order to prove Proposition~\ref{prop:tp}, we start by
notations and preliminary result.

\begin{lemma}
  \label{le:sigma_bar}
  Given an admissible solution of
  Problem~\eqref{flow_model_problem_formulation}, for all $s\in \SOURCES$
  and all $\lday \in \LDAYS$, we can extend the bijection
  $\bijection{\Lday{\aday}}{\source}:
  \PermutationBijectionDomain_{\source,\lday} \to
  \PermutationBijectionCoDomain_{\source,\lday}$ to a bijection
  $\obijection{\Lday{\aday}}{\source}: \MaxArcsSource \to \MaxArcsSource$, which
  satisfies
  \begin{equation}\label{eq:permutation_constraints_b}
    \Flow{y^{\mathsf{in}}}{\Lday{\aday}}{\source}{l} 
    = \Flow{y^{\mathsf{out}}}{\Lday{\aday}}{\source}{\obijection{\Lday{\aday}}{\source}(l)}
    \eqsepv \forall l \in \MaxArcsSource \eqfinp
  \end{equation}
\end{lemma}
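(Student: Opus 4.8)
The plan is to construct $\obijection{\Lday{\aday}}{\source}$ by gluing the given bijection $\bijection{\Lday{\aday}}{\source}$ on the active arcs to an arbitrary bijection between the complementary (inactive) index sets. First I would fix $\source \in \SOURCES$ and $\lday \in \LDAYS$ and partition the index set $\MaxArcsSource$ in two ways: as the disjoint union of the active incoming arcs $\PermutationBijectionDomain_{\source,\lday}$ and its complement $\MaxArcsSource \setminus \PermutationBijectionDomain_{\source,\lday}$, and as the disjoint union of the active outgoing arcs $\PermutationBijectionCoDomain_{\source,\lday}$ and its complement $\MaxArcsSource \setminus \PermutationBijectionCoDomain_{\source,\lday}$. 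The cardinality identity $\cardinal{\PermutationBijectionDomain_{\source,\lday}} = \cardinal{\PermutationBijectionCoDomain_{\source,\lday}}$, established just before the lemma statement as a consequence of the storage flow-conservation equation~\eqref{eq:storage_sent_leaving_source}, then forces the two complements to share the same cardinality as well.

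Next I would invoke the fact that two finite sets of equal cardinality admit a bijection: pick any bijection $\tau : \MaxArcsSource \setminus \PermutationBijectionDomain_{\source,\lday} \to \MaxArcsSource \setminus \PermutationBijectionCoDomain_{\source,\lday}$, and define
\begin{equation*}
  \obijection{\Lday{\aday}}{\source}(l) =
  \begin{cases}
    \bijection{\Lday{\aday}}{\source}(l) & \text{if } l \in \PermutationBijectionDomain_{\source,\lday} \eqfinv \\
    \tau(l) & \text{otherwise} \eqfinp
  \end{cases}
\end{equation*}
Since $\bijection{\Lday{\aday}}{\source}$ maps $\PermutationBijectionDomain_{\source,\lday}$ bijectively onto $\PermutationBijectionCoDomain_{\source,\lday}$ and $\tau$ maps the complementary domain bijectively onto the complementary codomain, and since the two target sets are disjoint with union $\MaxArcsSource$, the glued map $\obijection{\Lday{\aday}}{\source}$ is a bijection of $\MaxArcsSource$ onto itself that restricts to $\bijection{\Lday{\aday}}{\source}$ on $\PermutationBijectionDomain_{\source,\lday}$, which is the required extension.

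It then remains to verify~\eqref{eq:permutation_constraints_b}, which I would do by a two-case argument exploiting that the storage flows are binary. For $l \in \PermutationBijectionDomain_{\source,\lday}$ one has $\Flow{y^{\mathsf{in}}}{\Lday{\aday}}{\source}{l} = 1$ by definition of $\PermutationBijectionDomain_{\source,\lday}$, and $\obijection{\Lday{\aday}}{\source}(l) = \bijection{\Lday{\aday}}{\source}(l) \in \PermutationBijectionCoDomain_{\source,\lday}$ gives $\Flow{y^{\mathsf{out}}}{\Lday{\aday}}{\source}{\obijection{\Lday{\aday}}{\source}(l)} = 1$; for $l \notin \PermutationBijectionDomain_{\source,\lday}$ one has $\Flow{y^{\mathsf{in}}}{\Lday{\aday}}{\source}{l} = 0$, and $\obijection{\Lday{\aday}}{\source}(l) = \tau(l) \notin \PermutationBijectionCoDomain_{\source,\lday}$ gives $\Flow{y^{\mathsf{out}}}{\Lday{\aday}}{\source}{\obijection{\Lday{\aday}}{\source}(l)} = 0$. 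In both cases the two sides of~\eqref{eq:permutation_constraints_b} agree. There is no genuine obstacle here: the construction is purely set-theoretic, and the only point that must be used is the equal cardinality of the complements (inherited from~\eqref{eq:storage_sent_leaving_source}) together with the binary nature of $\Flow{y^{\mathsf{in}}}{\Lday{\aday}}{\source}{l}$ and $\Flow{y^{\mathsf{out}}}{\Lday{\aday}}{\source}{l}$, which makes membership in $\PermutationBijectionDomain_{\source,\lday}$ (resp. $\PermutationBijectionCoDomain_{\source,\lday}$) equivalent to the flow being equal to $1$.
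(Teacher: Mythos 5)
Your proof is correct and follows essentially the same route as the paper: both glue $\bijection{\Lday{\aday}}{\source}$ to an arbitrary bijection between the complements $\MaxArcsSource \setminus \PermutationBijectionDomain_{\source,\lday}$ and $\MaxArcsSource \setminus \PermutationBijectionCoDomain_{\source,\lday}$ (which exist and are in bijection thanks to the cardinality identity derived from~\eqref{eq:storage_sent_leaving_source}), and then check~\eqref{eq:permutation_constraints_b} by noting that active (value $1$) flows are matched by $\bijection{\Lday{\aday}}{\source}$ and inactive (value $0$) flows by the complementary bijection. Your write-up merely makes explicit the two-case verification that the paper declares straightforward.
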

\begin{proof} We fix $\source \in \SOURCES$ and $\lday \in \LDAYS$.
  As $\cardinal{\PermutationBijectionDomain_{\source,\lday}} =
  \cardinal{\PermutationBijectionCoDomain_{\source,\lday}}$, the set of bijections
  from $\MaxArcsSource\backslash \PermutationBijectionDomain_{\source,\lday}$ to
  $\MaxArcsSource \backslash  \PermutationBijectionCoDomain_{\source,\lday}$ is nonempty
  and we pick one such bijection $\tbijection{\Lday{\aday}}{\source}:  
  \MaxArcsSource\backslash \PermutationBijectionDomain_{\source,\lday}\to
  \MaxArcsSource \backslash  \PermutationBijectionCoDomain_{\source,\lday}$. 
  Defining~$\obijection{\Lday{\aday}}{\source}$ by
  \begin{equation}
    \obijection{\Lday{\aday}}{\source}(l) =
    \begin{cases}
      \bijection{\Lday{\aday}}{\source}(l)
      & \text{if }
        l \in \PermutationBijectionDomain_{\source,\lday}\eqfinv \\
      \tbijection{\Lday{\aday}}{\source}(l)
      & \text{if }
        l \in \MaxArcsSource\backslash \cardinal{\PermutationBijectionDomain_{\source,\lday}}
        \eqfinv
    \end{cases}
  \end{equation}
  it is straightforward to see that~\eqref{eq:permutation_constraints_b} is
  satisfied as storage flows of value 1 are propagated by $\bijection{\Lday{\aday}}{\source}$
  and storage flows of value 0 are propagated by $\tbijection{\Lday{\aday}}{\source}$.
\end{proof}

Now, we introduce some notations.
For all $\Lday{\aday} \in \LDAYS$, we consider the  mappings
$\theta_{\Lday{\aday}}$ defined as follows
\begin{subequations}
  \label{eq:thetaLday}
  \begin{align} 
    \theta_{\Lday{\aday}}: \ARCS^{\TEG}_{\Lday{\aday}^-}
    & \to \ARCS^{\TEG}_{\Lday{\aday}} \nonumber
    \\
    (\source{:}k, \Lday{\aday}^-)
    & \mapsto
      \begin{cases}
        (\source, \destination_{l'}, \Lday{\aday})
        & \text{if }
        % l'=\obijection{\Lday{\aday}}{\source}(k+\cardinal{\STATIONS}) \leq \cardinal{\STATIONS}
          l' \le \cardinal{\STATIONS}
        \\
        (\source,l'{-}\cardinal{\STATIONS}, \Lday{\aday})
        & \text{otherwise}
        \\
        \text{with }
        l' =\obijection{\Lday{\aday}}{\source}&\hspace{-0.35cm}(k+\cardinal{\STATIONS})
      \end{cases}\label{eq:theta1}
    \\
    (d_l, \source, \Lday{\aday}^-)
    &\mapsto  \begin{cases}
      (\source, \destination_{l'}, \Lday{\aday})
      & \text{if }
        l' \leq \cardinal{\STATIONS}
      \\
      (\source,l'-\cardinal{\STATIONS}, \Lday{\aday})
      & \text{otherwise}
      \\
      \text{with } l'=\obijection{\Lday{\aday}}{\source}(l) 
    \end{cases}\label{eq:theta2}
  \\
    (d, d , \Lday{\aday}^-)
    & \mapsto (d,d, \Lday{\aday})\label{eq:theta3}
      \eqfinv
  \end{align}
\end{subequations}
where, for all $\Lday{\aday} \in \EDAYS$, the subset of arcs
$\ARCS^{\TEG}_{\aday}$ is defined by
$\ARCS^{\TEG}_{\aday} = \nset{\arc \in \ARCS^{\TEG}}{\dayarc{a}=j}$,
where $\textsf{d}$ was defined in \S\ref{sbsec:def_arcs}.

\begin{lemma}
  \label{lem:theta_under}
  For all $\Lday{\aday} \in \LDAYS$,
  the mapping $\theta_{\Lday{\aday}}$ defined in Equation~\eqref{eq:thetaLday}
  is well-defined, satisfies
  $\tailarc{\theta_{\Lday{\aday}}(\arc)}= \headarc{\arc}$ and is bijective.
  Moreover, it preserves the value of the storage flow as we have
  $\Storageflow_{\arc}= \Storageflow_{\theta_{\Lday{\aday}}(\arc)}$ for all
  $\arc \in \ARCS^{\TEG}_{\Lday{\aday}^-}$.
\end{lemma}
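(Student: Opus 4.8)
The plan is to verify the four claims---well-definedness, the head-to-tail identity, bijectivity, and storage-flow preservation---by treating the three cases of the piecewise definition~\eqref{eq:theta1}--\eqref{eq:theta3} separately. First I would pin down the domain: since $\Lday{\aday}^- = \Rday{\aday-1} \in \RDAYS \cup \na{\Rday{0}}$, any arc $\arc$ with $\dayarc{\arc} = \Lday{\aday}^-$ lies in $\RARCS \cup \SARCS$, hence is either a destination self-loop $(\destination,\destination,\Lday{\aday}^-)$, a destination-to-source arc $(\destination_l, \source, \Lday{\aday}^-)$, or a source self-loop $(\source{:}k, \Lday{\aday}^-)$. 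These three families are exactly Cases~\eqref{eq:theta3},~\eqref{eq:theta2} and~\eqref{eq:theta1}, and they partition $\ARCS^{\TEG}_{\Lday{\aday}^-}$, so $\theta_{\Lday{\aday}}$ is defined on its whole domain without ambiguity. For the image, I would use that $l' = \obijection{\Lday{\aday}}{\source}(\cdot) \in \MaxArcsSource = \IntegerSet{\cardinal{\STATIONS} + \kfun(\source)}$: the branch $l' \le \cardinal{\STATIONS}$ returns a source-to-destination arc $(\source, \destination_{l'}, \Lday{\aday}) \in \LARCS$, the branch $l' > \cardinal{\STATIONS}$ returns a source self-loop whose copy index $l'-\cardinal{\STATIONS}$ is a legitimate element of $\IntegerSet{\kfun(\source)}$, and Case~\eqref{eq:theta3} returns $(\destination,\destination,\Lday{\aday}) \in \LARCS$. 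All three outputs have tail time index $\Lday{\aday}$, so $\theta_{\Lday{\aday}}$ maps into $\ARCS^{\TEG}_{\Lday{\aday}}$ and is well-defined.

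Next I would check the identity $\tailarc{\theta_{\Lday{\aday}}(\arc)} = \headarc{\arc}$ case by case. In Cases~\eqref{eq:theta1} and~\eqref{eq:theta2} the head of $\arc$ is the source node $(\source, \Lday{\aday})$, and both branches of the output (a source-to-destination arc or a source self-loop) have tail $(\source, \Lday{\aday})$; in Case~\eqref{eq:theta3} the head of $\arc$ is $(\destination, \Lday{\aday})$ and the output $(\destination,\destination,\Lday{\aday})$ has tail $(\destination, \Lday{\aday})$. Hence the identity holds throughout.

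For bijectivity, the decisive observation is that, thanks to the head-to-tail identity just established, $\theta_{\Lday{\aday}}$ splits into independent blocks indexed by the nodes $(v,\Lday{\aday})$, each block carrying the arcs with head $(v,\Lday{\aday})$ to the arcs with tail $(v,\Lday{\aday})$. On a source node $(\source,\Lday{\aday})$, the incoming arcs are the destination-to-source arcs and the source self-loops, which the enumeration~\eqref{def:yin} indexes bijectively by $\MaxArcsSource$ (the arc $(\destination_l,\source,\Lday{\aday}^-)$ getting index $l$, the arc $(\source{:}k,\Lday{\aday}^-)$ getting index $k+\cardinal{\STATIONS}$); symmetrically, the outgoing arcs are indexed bijectively by $\MaxArcsSource$ through~\eqref{def:yout}. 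By construction $\theta_{\Lday{\aday}}$ sends the incoming arc of $y^{\mathsf{in}}$-index $m$ to the outgoing arc of $y^{\mathsf{out}}$-index $\obijection{\Lday{\aday}}{\source}(m)$, so on this block it coincides with the bijection $\obijection{\Lday{\aday}}{\source}$ of $\MaxArcsSource$ furnished by Lemma~\ref{le:sigma_bar}, expressed in these two enumerations. On a destination node $(\destination,\Lday{\aday})$ the only in- and out-arcs are the self-loops and $\theta_{\Lday{\aday}}$ acts as the identity. Since these source blocks and destination-self-loop blocks partition both $\ARCS^{\TEG}_{\Lday{\aday}^-}$ and $\ARCS^{\TEG}_{\Lday{\aday}}$, the map $\theta_{\Lday{\aday}}$ is a disjoint union of bijections, hence a bijection.

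Storage-flow preservation then follows from the same block decomposition. On a source block, $\Storageflow_{\arc}$ equals the entry $\Flow{y^{\mathsf{in}}}{\Lday{\aday}}{\source}{m}$ and $\Storageflow_{\theta_{\Lday{\aday}}(\arc)}$ equals $\Flow{y^{\mathsf{out}}}{\Lday{\aday}}{\source}{\obijection{\Lday{\aday}}{\source}(m)}$, so their equality is precisely relation~\eqref{eq:permutation_constraints_b} of Lemma~\ref{le:sigma_bar}; on a destination self-loop, both $\Storageflow_{(\destination,\destination,\Lday{\aday}^-)}$ and $\Storageflow_{(\destination,\destination,\Lday{\aday})}$ equal $1$ by~\eqref{eq:always_storage_destination}, so they agree. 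I expect the genuine difficulty here to be bookkeeping rather than mathematical depth: one must keep straight the two offset-by-$\cardinal{\STATIONS}$ conventions (destination-to-source inflows versus source-to-destination outflows, and which copy of a source self-loop carries which index) and confirm that the per-node enumerations underlying $\Flow{y^{\mathsf{in}}}{\Lday{\aday}}{\source}{\cdot}$ and $\Flow{y^{\mathsf{out}}}{\Lday{\aday}}{\source}{\cdot}$ are indeed bijections with $\MaxArcsSource$, which is exactly what licenses reading $\theta_{\Lday{\aday}}$ as $\obijection{\Lday{\aday}}{\source}$ on each source block.
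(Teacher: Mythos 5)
Your proposal is correct and follows essentially the same route as the paper's proof: well-definedness and the head-to-tail identity by case inspection, bijectivity reduced to the bijectivity of $\obijection{\Lday{\aday}}{\source}$ from Lemma~\ref{le:sigma_bar}, and flow preservation via Equation~\eqref{eq:permutation_constraints_b} on the source arcs and Equation~\eqref{eq:always_storage_destination} on the destination self-loops. Your explicit per-node block decomposition merely spells out what the paper leaves implicit when it asserts bijectivity of $\theta_{\Lday{\aday}}$ from that of $\obijection{\Lday{\aday}}{\source}$.
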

\begin{proof}
  The mapping \( \theta_{\Lday{\aday}} \) is well-defined, as by construction
  it maps each element of
  \( \ARCS^{\TEG}_{\Lday{\aday}^-} \) to a corresponding element in
  \( \ARCS^{\TEG}_{\Lday{\aday}} \). From the definition in Equation~\eqref{eq:thetaLday} of the mapping
  \( \theta_{\Lday{\aday}} \), it is straightforward to check that
  \( \tailarc{\theta_{\Lday{\aday}}(\arc)} = \headarc{\arc} \) for any
  \( \arc \in \ARCS^{\TEG}_{\Lday{\aday}^-} \). Furthermore,
  \( \theta_{\Lday{\aday}} \) is bijective, as the underlying mapping
  \( \obijection{\Lday{\aday}}{\source} \) built in Lemma~\ref{le:sigma_bar}
  is itself bijective.
  
  Now, we check that the storage flows are preserved.  First, if
  $\arc=(\source{:}k,\Lday{\aday}^-)$ or
  $\arc=(\destination_l,\source,\Lday{\aday}^-)$, we have that
  \( \Storageflow_{\arc} = \Storageflow_{\theta_{\Lday{\aday}}(\arc)} \) using
  Equation~\eqref{eq:permutation_constraints_b}. Second, if
  $\arc=(\destination,\destination,\Lday{\aday}^-)$, the equality
  \( \Storageflow_{\arc} = \Storageflow_{\theta_{\Lday{\aday}}(\arc)} \) is satisfied
  using Equation~\eqref{eq:always_storage_destination} which gives that
  \( \Storageflow_{\arc}\) and \(\Storageflow_{\theta_{\Lday{\aday}}(\arc)} \) are
  both equal to one which ends the proof.
\end{proof}

For $\Rday{\aday} \in \RDAYS$, we consider the mappings $\theta_{\Rday{\aday}}$ defined as follows

\begin{subequations}
  \label{eq:thetaRday}
  \begin{align}
    \theta_{\Rday{\aday}}: \ARCS^{\TEG}_{\Rday{\aday}^-}
    & \to \ARCS^{\TEG}_{\Rday{\aday}}
      \nonumber \\
    (\source{:}k, \Rday{\aday}^-)
    &\mapsto (\source{:}k, \Rday{\aday})
      \label{eq:thetaRday1}   \\
    (\source, d, \Rday{\aday}^-)
    &\mapsto 
      \begin{cases}
        (d, d, \Rday{\aday}) & \text{if }
                               \Storageflow_{\source,d, \Rday{\aday}^-} =1
        \\
        (d,\hbijection{\Rday{\aday}}{\destination}(\source), \Rday{\aday})
                             & \text{otherwise }
      \end{cases}
      \label{eq:thetaRday2}  \\
    (d, d , \Rday{\aday}^-) & \mapsto
                              \begin{cases}
                                (\destination, \source, \Rday{\aday})
                                & \text{if }
                                  \exists \source \text{ s.t. }
                                  \Storageflow_{\destination,\source, \Rday{\aday}} =1 
                                \\
                                (d,d, \Rday{\aday})
                                & \text{otherwise}\eqfinv
                              \end{cases}
                              \label{eq:thetaRday3}
  \end{align}
\end{subequations}
where $\hbijection{\Rday{\aday}}{\destination}$ is any bijection from the subset
$\nset{\source \in
  \SOURCES}{\Storageflow_{\source,\destination,\Rday{\aday}^-}=0}$ to the subset
$\nset{\source \in \SOURCES}{\Storageflow_{\destination,\source,\Rday{\aday}}=0}$,
which is guaranteed to exist as the two subsets
$\nset{\source \in
  \SOURCES}{\Storageflow_{\source,\destination,\Rday{\aday}^-}=0}$ and
$\nset{\source \in \SOURCES}{\Storageflow_{\destination,\source,\Rday{\aday}}=0}$
have the same cardinality using
Equation~\eqref{eq:storage_in_storage_out_destination}.

\begin{lemma}\label{lem:theta_above}
  For $\Rday{\aday} \in \RDAYS$, the mapping $\theta_{\Rday{\aday}}$ defined in
  Equation~\eqref{eq:thetaRday} is well-defined, bijective and satisfies
  $\tailarc{\theta_{\Rday{\aday}}(\arc)}= \headarc{\arc}$.  Moreover, it preserves
  the value of the storage flow as we have
  $\Storageflow_{\arc}= \Storageflow_{\theta_{\Rday{\aday}}(\arc)}$ for all
  $\arc \in \ARCS^{\TEG}_{\Rday{\aday}^-}$.
\end{lemma}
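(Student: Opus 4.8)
The plan is to reproduce, at the level of the second part of the day, the block-decomposition strategy used for Lemma~\ref{lem:theta_under}: instead of arguing about injectivity and surjectivity of $\theta_{\Rday{\aday}}$ globally, I would partition both $\ARCS^{\TEG}_{\Rday{\aday}^-}$ and $\ARCS^{\TEG}_{\Rday{\aday}}$ by the physical node shared by the head of an arc and the tail of its image at time $\Rday{\aday}$, and then check the bijection block by block. Recalling that $\Rday{\aday}^-=\Lday{\aday}$, the domain consists exactly of the arcs $\np{\source{:}k,\Rday{\aday}^-}$, $\np{\source,\destination,\Rday{\aday}^-}$ and $\np{\destination,\destination,\Rday{\aday}^-}$; the first kind has its head at a source node $\TEGnode{\Rday{\aday}}{\source}$ and the last two at a destination node $\TEGnode{\Rday{\aday}}{\destination}$, and symmetrically the codomain arcs $\np{\source{:}k,\Rday{\aday}}$ are rooted at sources while $\np{\destination,\source,\Rday{\aday}}$ and $\np{\destination,\destination,\Rday{\aday}}$ are rooted at destinations. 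This makes the partition well defined and exhaustive.

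First I would dispose of the source blocks. For fixed $\source$ the domain block is the $\kfun(\source)$ arcs $\np{\source{:}k,\Rday{\aday}^-}$ and the codomain block the $\kfun(\source)$ arcs $\np{\source{:}k,\Rday{\aday}}$; by~\eqref{eq:thetaRday1} the map is the identity on $k$, hence a bijection, and the identity $\Storageflow_{\arc}=\Storageflow_{\theta_{\Rday{\aday}}(\arc)}$ is precisely the storage part of the no-refill constraint~\eqref{eq:no_refill_day}. The identity $\tailarc{\theta_{\Rday{\aday}}(\arc)}=\headarc{\arc}$ is immediate here and, for the destination blocks below, is equally immediate from~\eqref{eq:thetaRday}, since every image arc is rooted at the node at which the source arc terminates.

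The bulk of the work, and the main obstacle, is the destination blocks. For fixed $\destination$ both $\bset{\np{\source,\destination,\Rday{\aday}^-}}{\source\in\SOURCES}\cup\na{\np{\destination,\destination,\Rday{\aday}^-}}$ and $\bset{\np{\destination,\source,\Rday{\aday}}}{\source\in\SOURCES}\cup\na{\np{\destination,\destination,\Rday{\aday}}}$ have cardinality $\cardinal{\SOURCES}+1$, and I would split on whether a swap occurs at $\destination$. If some incoming arc is active, constraint~\eqref{eq:at_most_one_storage_destination} makes it unique, say $\np{\source_0,\destination,\Rday{\aday}^-}$, which~\eqref{eq:thetaRday2} sends to $\np{\destination,\destination,\Rday{\aday}}$; constraint~\eqref{eq:storage_in_storage_out_destination} then forces exactly one active return arc $\np{\destination,\source^{\ast},\Rday{\aday}}$, the unique image of $\np{\destination,\destination,\Rday{\aday}^-}$ under~\eqref{eq:thetaRday3}. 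The remaining $\cardinal{\SOURCES}-1$ inactive incoming arcs are carried by $\hbijection{\Rday{\aday}}{\destination}$ onto the $\cardinal{\SOURCES}-1$ inactive return arcs $\np{\destination,\source,\Rday{\aday}}$ with $\source\neq\source^{\ast}$. In the no-swap case all incoming arcs and, by~\eqref{eq:storage_in_storage_out_destination}, all return arcs are inactive, $\hbijection{\Rday{\aday}}{\destination}$ is a bijection of $\SOURCES$ onto itself mapping the incoming arcs onto all $\np{\destination,\source,\Rday{\aday}}$, and $\np{\destination,\destination,\Rday{\aday}^-}$ goes to $\np{\destination,\destination,\Rday{\aday}}$. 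The delicate point I would emphasise is the bookkeeping that the explicit images of the active arcs and of the self-arc together with the range of $\hbijection{\Rday{\aday}}{\destination}$ tile the codomain block with no overlap and no gap; this is exactly where the cardinality identity~\eqref{eq:storage_in_storage_out_destination}, which guarantees the existence of $\hbijection{\Rday{\aday}}{\destination}$ and fixes its domain and codomain, is indispensable.

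It then remains to collect the routine claims. Well-definedness follows from the uniqueness of $\source_0$ and of $\source^{\ast}$ via~\eqref{eq:at_most_one_storage_destination} and~\eqref{eq:storage_in_storage_out_destination}, together with the existence of $\hbijection{\Rday{\aday}}{\destination}$ noted before the statement. Storage-flow preservation on the destination blocks is read off case by case: the active incoming arc and the self-arcs are sent to arcs of storage flow $1$ by~\eqref{eq:always_storage_destination}, while each inactive arc is sent, by the very definition of $\hbijection{\Rday{\aday}}{\destination}$, to a return arc of storage flow $0$, so $\Storageflow_{\arc}=\Storageflow_{\theta_{\Rday{\aday}}(\arc)}$ holds throughout. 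Assembling the source and destination blocks then yields the global bijection and completes the proof.
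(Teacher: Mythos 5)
Your proof is correct and rests on exactly the same ingredients as the paper's: uniqueness of the active incoming arc via~\eqref{eq:at_most_one_storage_destination}, the matching active return arc and the existence of $\hbijection{\Rday{\aday}}{\destination}$ via~\eqref{eq:storage_in_storage_out_destination}, flow preservation via~\eqref{eq:no_refill_day} and~\eqref{eq:always_storage_destination}, and the same swap/no-swap case split. The only difference is organizational --- you verify bijectivity block by block (per physical node) by tiling each codomain block, whereas the paper proves surjectivity globally and concludes from $\cardinal{\ARCS^{\TEG}_{\Rday{\aday}^-}}=\cardinal{\ARCS^{\TEG}_{\Rday{\aday}}}$ --- which does not change the substance of the argument.
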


\begin{proof}
  The mapping \( \theta_{\Rday{\aday}} \) is well-defined, as by construction it maps
  each element of \( \ARCS^{\TEG}_{\Rday{\aday}^-} \) to a corresponding element
  in \( \ARCS^{\TEG}_{\Rday{\aday}} \).
  From the definition in Equation~\eqref{eq:thetaRday} of the mapping
  \( \theta_{\Rday{\aday}} \), it is straightforward to check that
  \( \tailarc{\theta_{\Rday{\aday}}(\arc)} = \headarc{\arc} \) for any
  \( \arc \in \ARCS^{\TEG}_{\Rday{\aday}^-} \).
  Now, we turn to the proof that the mapping \( \theta_{\Lday{\aday}} \) is bijective.
  Since both the domain and codomain of $\theta_{\Rday{\aday}}$ have the same cardinality,
  that is \( \cardinal{\ARCS^{\TEG}_{\Rday{\aday}^-}} = \cardinal{\ARCS^{\TEG}_{\Rday{\aday}}} \)
  it is enough to prove that \( \theta_{\Rday{\aday}} \) is surjective to obtain that it is bijective.
  To prove that \( \theta_{\Rday{\aday}} \) is surjective we proceed by cases in the arc set
  $\ARCS^{\TEG}_{\Rday{\aday}}$.
  
  $\bullet$ First, for an arc of the form \( (\source{:}k, \Rday{\aday}) \), it follows directly from
  Equation~\eqref{eq:thetaRday1} that
  \( \theta_{\Rday{\aday}}((\source{:}k, \Rday{\aday}^-)) = (\source{:}k, \Rday{\aday})
  \).

  $\bullet$ Second, for an arc \( (\destination, \destination, \Rday{\aday}) \), we have
  to consider two possibilities.  If there exists \( \source\in \SOURCES \) such
  that \( \SFlow{\Rday{\aday}^-}{\source}{\destination} = 1 \), then
  \( \theta_{\Rday{\aday}}((\source, \destination, \Rday{\aday}^-)) = (\destination,
  \destination, \Rday{\aday}) \) using Equation~\eqref{eq:thetaRday2}.
  Otherwise, we have that
  \( \SFlow{\Rday{\aday}^-}{\source}{\destination} = 0 \) for all
  \( \source \in \SOURCES \) and it follows using
  Equation~\eqref{eq:storage_in_storage_out_destination} that
  \( \SFlow{\Rday{\aday}}{\destination}{\source} = 0 \) for all
  \( \source \in \SOURCES \) and therefore that
  \( \theta_{\Rday{\aday}}((\destination, \destination, \Rday{\aday}^-)) =
  (\destination, \destination, \Rday{\aday}) \) using
  Equation~\eqref{eq:thetaRday3}.
  
  $\bullet$ Third, for an arc of the form \( (\destination, \source, \Rday{\aday}) \), we
  have again to consider two possibilities.  If
  $\SFlow{\Rday{\aday}}{\destination}{\source} = 1$, then
  \( \theta_{\Rday{\aday}}((\destination, \destination, \Rday{\aday}^-)) =
  (\destination, \source, \Rday{\aday}) \) using Equation~\eqref{eq:thetaRday3}.
  Otherwise, by definition of $\hbijection{\destination}{\Rday{\aday}}$, there
  exists $\source'$ such that
  $\hbijection{\destination}{\Rday{\aday}}(\source')=\source$, and therefore we
  have using Equation~\eqref{eq:thetaRday2} that
  \( \theta_{\Rday{\aday}}((\source', \destination, \Rday{\aday}^-)) =
  (\destination,\hbijection{\destination}{\Rday{\aday}}(\source'), \Rday{\aday})
  = (\destination, \source, \Rday{\aday}) \).
  
  We conclude that all the arcs in \( \ARCS^{\TEG}_{\Rday{\aday}} \) are in the image of
  \( \theta_{\Rday{\aday}} \) and thus \( \theta_{\Rday{\aday}} \) is surjective.
  
  It remains to prove that $\theta_{\Rday{\aday}}(\arc)$ preserves the value of the storage flow.
  We consider the three possible cases enumerated in Equation~\eqref{eq:thetaRday}.
  \begin{enumerate}
  \item If \( \arc = (\source{:}k, \Rday{\aday}^-) \), we have
    \[
      \Storageflow_\arc = \SFlow{\Rday{\aday}^-}{\source}{k}=
      \SFlow{\Rday{\aday}}{\source}{k}=\Storageflow_{\theta_{\Rday{\aday}}(\arc)}
    \] using Equation~\eqref{eq:no_refill_day} and Equation~\eqref{eq:thetaRday1}.
    
  \item If \( \arc = (\source, \destination, \Rday{\aday}^-) \) we have two subcases to consider.
    If \( \Storageflow_\arc = 1 \) then we have
    \[ \Storageflow_\arc =
      \SFlow{\Rday{\aday}^-}{\source}{\destination}
      = 1 = \SFlow{\Rday{\aday}}{\destination}{\destination}= \Storageflow_{\theta_{\Rday{\aday}}(\arc)}
      \eqsepv 
    \]
    using Equation~\eqref{eq:always_storage_destination} and Equation~\eqref{eq:thetaRday2}.
    Otherwise, we have
    \[
      \Storageflow_\arc=\SFlow{\Rday{\aday}^-}{\source}{\destination}=
      \SFlow{\Rday{\aday}}{\destination}{\hat{\sigma}_{\destination,\Rday{\aday}}(\source)}
      = \Storageflow_{\theta_{\Rday{\aday}}(\arc)}
      \eqfinv
    \]
    using the definition of $\hat{\sigma}_{\destination,\Rday{\aday}}$ and Equation~\eqref{eq:thetaRday2}.
        
  \item If \( \arc = (\destination, \destination, \Rday{\aday}^-) \), we again
    consider two subcases.  If there exists \( \source \) such that
    \( \SFlow{\Rday{\aday}}{\destination}{\source} = 1 \), we obtain
    \[ \Storageflow_\arc 
      = \SFlow{\Rday{\aday}^-}{\destination}{\destination}
      = 1 = \SFlow{\Rday{\aday}}{\destination}{\source}
      = \Storageflow_{\theta_{\Rday{\aday}}(\arc)}
      \eqsepv
    \] using
    Equation~\eqref{eq:always_storage_destination} and Equation~\eqref{eq:thetaRday3}.
    Otherwise, we have that
    \[ \Storageflow_\arc
      = \SFlow{\Rday{\aday}^-}{\destination}{\destination}
      = 1 = \SFlow{\Rday{\aday}}{\destination}{\destination}
      = \Storageflow_{\theta_{\Rday{\aday}}(\arc)}
    \eqfinv
  \]
  using Equation~\eqref{eq:always_storage_destination} and Equation~\eqref{eq:thetaRday3}.
\end{enumerate}
 
Thus, in all cases, \( \theta_{\Rday{\aday}} \) preserves the value of the storage flow which concludes the proof.
\end{proof}
We prove in Lemma~\ref{lemma:preservation_actif_flow} that the number of active
storage flows for each day is constant and equal to the number of storages
$\cardinal{\STORAGES}$. This result serves as a control test, as our flow
formulation is designed to ensure that the number of active storage flows is
globally conserved.

\begin{lemma}\label{lemma:preservation_actif_flow}
  For all $j \in \EDAYS$, we have that
  \begin{equation}
    \sum_{\arc \in\ARCS^{\TEG}_j} \Storageflow_a = \cardinal{\STORAGES}
    \eqfinp
    \label{eq:active_flows_B}
  \end{equation}
\end{lemma}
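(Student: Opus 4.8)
The plan is to argue by induction along the finite totally ordered set $(\EDAYS, \preceq)$ of Equation~\eqref{eq:total_order}. The crucial point is that Lemmas~\ref{lem:theta_under} and~\ref{lem:theta_above} already supply, for \emph{every} pair of consecutive time indices, a bijection between the two corresponding arc-sets that preserves the storage flow; chaining these bijections along the order immediately propagates the value of the left-hand side of~\eqref{eq:active_flows_B} from one time index to the next.

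For the base case I would take $j = \Rday{0}$, the least element of $(\EDAYS, \preceq)$. Since $\ARCS^{\TEG}_{\Rday{0}} = \nset{a \in \ARCS^{\TEG}}{\dayarc{a} = \Rday{0}}$, Equation~\eqref{eq:active_flows_B} is then literally the initial-position constraint~\eqref{eq:initial_position_storage_set}, so there is nothing to prove.

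For the inductive step I would assume~\eqref{eq:active_flows_B} at some $j \in \EDAYS$ possessing a successor $j^+$, and split into the two kinds of transition occurring in~\eqref{eq:total_order}. If $j \in \RDAYS \cup \{\Rday{0}\}$, say $j = \Rday{\aday}$ with $\aday \in \{0, \ldots, J-1\}$, then $j^+ = \Lday{\aday+1}$ and Lemma~\ref{lem:theta_under} furnishes a bijection $\theta_{\Lday{\aday+1}} : \ARCS^{\TEG}_{j} \to \ARCS^{\TEG}_{j^+}$ with $\Storageflow_a = \Storageflow_{\theta_{\Lday{\aday+1}}(a)}$. If instead $j = \Lday{\aday}$ with $\aday \in \DAYS$, then $j^+ = \Rday{\aday}$ and Lemma~\ref{lem:theta_above} furnishes a bijection $\theta_{\Rday{\aday}} : \ARCS^{\TEG}_{j} \to \ARCS^{\TEG}_{j^+}$ enjoying the same flow-preservation property. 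In either case, writing $\theta$ for the relevant bijection and reindexing the sum over $\ARCS^{\TEG}_{j^+}$ via $b = \theta(a)$, I would compute
\begin{align*}
\sum_{b \in \ARCS^{\TEG}_{j^+}} \Storageflow_b
&= \sum_{a \in \ARCS^{\TEG}_{j}} \Storageflow_{\theta(a)} \\
&= \sum_{a \in \ARCS^{\TEG}_{j}} \Storageflow_a
= \cardinal{\STORAGES},
\end{align*}
where the first equality uses that $\theta$ is a bijection, the second its compatibility with the storage flow, and the last the induction hypothesis. Because the transitions $\Rday{\aday} \to \Lday{\aday+1}$ and $\Lday{\aday} \to \Rday{\aday}$ exhaust every consecutive pair of the order~\eqref{eq:total_order}, this establishes~\eqref{eq:active_flows_B} for all $j \in \EDAYS$.

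I do not expect any genuine obstacle here: all the combinatorial difficulty—the very existence, bijectivity and flow-compatibility of the maps $\theta_{\Lday{\aday}}$ and $\theta_{\Rday{\aday}}$—has been absorbed into the preceding two lemmas, whose proofs handle the delicate case analysis at the source and destination nodes. The only new ingredient is the elementary observation that a flow-preserving bijection between finite arc-sets leaves the total flow unchanged, which reduces the claimed global conservation to the short induction above.
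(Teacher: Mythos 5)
Your proof is correct and follows essentially the same route as the paper: induction along the total order $(\EDAYS,\preceq)$, with the base case given by Equation~\eqref{eq:initial_position_storage_set} and the inductive step obtained by reindexing the sum through the flow-preserving bijections supplied by Lemmas~\ref{lem:theta_under} and~\ref{lem:theta_above}. Your explicit split of the inductive step into the two kinds of consecutive transitions, with the map indexed by the target time $j^+$, merely spells out what the paper's one-line computation leaves implicit.
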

\begin{proof} We prove Equality~\eqref{eq:active_flows_B} by induction.
  The equality is satisfied by assumption for $\aday= \overline{0}$
  (see Equation~\eqref{eq:initial_position_storage_set}).
  Now assume that it is true for $\aday \in \EDAYS$.
  We have
  \begin{align*}
    \sum_{\arc \in\ARCS^{\TEG}_{j^+}} \Storageflow_a
    =
    \sum_{\arc \in\ARCS^{\TEG}_j} \Storageflow_{\theta_j(a)}
    = \sum_{\arc \in\ARCS^{\TEG}_j} \Storageflow_{a}
    =  \cardinal{\STORAGES}
    \eqfinv
  \end{align*}
  where the first equality is obtained as $\theta_j$ is bijective, the second one
  as $\Storageflow_{\arc}= \Storageflow_{\theta_{\aday}}(\arc)$ and the last one by the induction assumption.
\end{proof}

\begin{definition}\label{definition:transport_planning}
A transport planning $\TransportPlanning$ is an arc path in the time-expanded graph $\GRAPH^{\TEG}$,
that is a sequence of arcs $\TransportPlanning= \nseqp{a_j}{j \in \na{\overline{0},\ldots, \overline{J}}}$ satisfying
\begin{subequations}
  \label{eq:transport_planning}
\begin{align}
  \headarc{a_{j}} &= \tailarc{a_{{j}^+}}\eqsepv \forall j \in \na{\overline{0},\ldots, \underline{J}}
  \\
  \dayarc{a_{j}}&=j \text{ and } \Storageflow_{a_j}= 1\eqsepv \forall j \in \EDAYS
                  \eqfinp
\end{align}
\end{subequations}

For an arc $\arc$, $\tailarc{\arc}$ denotes its tail node, $\headarc{\arc}$ its head node and $\dayarc{\arc}$ the time index of its tail node, as introduced in Definition~\ref{def:arc_properties}.
\end{definition}

The cardinality of the set
$\nset{a\in \ARCS^{\TEG}_{\overline{0}}}{\Storageflow_a =1}$ being
$\cardinal{\STORAGES}$ using Equation~\eqref{eq:initial_position_storage_set},
we denote its distinct elements as
$\na{a_{\overline{0},1}, \ldots , a_{\overline{0}, \cardinal{\STORAGES}}}$.  Now, we
build $\cardinal{\STORAGES}$ transport planning as described by the following
Proposition~\ref{prop:tp}.

\begin{proposition}
  \label{prop:tp}
  For $\storage \in \na{1,\ldots, \cardinal{\STORAGES}}$, we consider the arc path 
  $\TransportPlanning^\storage= \nseqp{a^\storage_j}{j \in \na{\overline{0},\ldots, \overline{J}}} $,
  defined recursively by  $a^\storage_{\overline{0}} = a_{\overline{0},\storage}$
  and for all $j$ in $(\LDAYS \cup \RDAYS)$ by
  \begin{equation*}
    a^\storage_{j}= \theta_{j}\np{a^\storage_{j^{-}}} \eqfinv
  \end{equation*}
  where $\theta_{\aday}$ is defined by~\eqref{eq:thetaLday} when $j\in\LDAYS$ and
  by~\eqref{eq:thetaRday} when $j\in\RDAYS$.
  We have the following properties.
  \begin{enumerate}[label=$B_{\arabic*}$]
  \item\label{P1} For all $\storage \in \na{1,\ldots, \cardinal{\STORAGES}}$, the arc path
    $\TransportPlanning^\storage$ is a transportation planning as
    defined in Definition~\ref{definition:transport_planning};
  \item\label{P2} For two distinct values $\storage,\storage' \in \STORAGES$,
    the paths $\TransportPlanning^\storage$ and $\TransportPlanning^{\storage'}$ do not have any arc in common;
  \item\label{P3} For each active arc $\arc~\in~\ARCS^{\TEG}$ such that $\Storageflow_\arc=1$, there
    exists a unique storage $\storage \in \STORAGES$ such that
  $\arc \in \TransportPlanning^\storage$.
  \end{enumerate}
\end{proposition}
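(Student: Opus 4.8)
The plan is to derive all three properties from the single structural fact, established in Lemmas~\ref{lem:theta_under} and~\ref{lem:theta_above}, that for every $j \in \LDAYS \cup \RDAYS$ the mapping $\theta_j : \ARCS^{\TEG}_{j^-} \to \ARCS^{\TEG}_j$ is a \emph{bijection} that satisfies the tail–head compatibility $\tailarc{\theta_j(\arc)} = \headarc{\arc}$ and \emph{preserves the storage flow}, $\Storageflow_{\arc} = \Storageflow_{\theta_j(\arc)}$. I would prove~\ref{P1} by a direct induction, \ref{P2} by exploiting injectivity of the $\theta_j$, and \ref{P3} by a counting argument combining~\ref{P2} with the global flow conservation of Lemma~\ref{lemma:preservation_actif_flow}.

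For~\ref{P1}, I would argue by induction along the total order $(\EDAYS, \preceq)$. The base case $a^\storage_{\overline{0}} = a_{\overline{0},\storage}$ is active by construction, since the starting arcs are enumerated among $\nset{\arc \in \ARCS^{\TEG}_{\overline{0}}}{\Storageflow_\arc = 1}$. For the inductive step I write $a^\storage_j = \theta_j(a^\storage_{j^-})$: since $\theta_j$ takes values in $\ARCS^{\TEG}_j$, we get $\dayarc{a^\storage_j} = j$ at once; storage-flow preservation gives $\Storageflow_{a^\storage_j} = \Storageflow_{a^\storage_{j^-}} = 1$ by the induction hypothesis; and the compatibility property yields $\tailarc{a^\storage_j} = \tailarc{\theta_j(a^\storage_{j^-})} = \headarc{a^\storage_{j^-}}$, which is exactly the concatenation condition of Definition~\ref{definition:transport_planning}. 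Hence each $\TransportPlanning^\storage$ is a transport planning.

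For~\ref{P2}, I would first observe that every arc $\arc$ carries a single well-defined time index $\dayarc{\arc}$, so it can occur in a path only at the position matching its own index: if $\arc \in \TransportPlanning^\storage$ then $\arc = a^\storage_{\dayarc{\arc}}$. Suppose, for contradiction, that distinct $\storage \neq \storage'$ shared an arc $\arc$, and set $j = \dayarc{\arc}$, so that $a^\storage_j = a^{\storage'}_j$. Applying injectivity of $\theta_j$ to $a^\storage_j = \theta_j(a^\storage_{j^-})$ and $a^{\storage'}_j = \theta_j(a^{\storage'}_{j^-})$ gives $a^\storage_{j^-} = a^{\storage'}_{j^-}$, and iterating this backward down to $\overline{0}$ yields $a_{\overline{0},\storage} = a_{\overline{0},\storage'}$, contradicting the distinctness of the enumerated starting arcs.

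For~\ref{P3}, uniqueness is immediate from~\ref{P2}, so the crux is coverage. Fix a time index $j \in \EDAYS$ and an active arc $\arc$ with $\dayarc{\arc} = j$ and $\Storageflow_\arc = 1$. Consider the family $\nseqa{a^\storage_j}{\storage \in \na{1,\ldots,\cardinal{\STORAGES}}}$: by the backward-injectivity argument of~\ref{P2} these $\cardinal{\STORAGES}$ arcs are pairwise distinct, and each is active by~\ref{P1}. They therefore constitute $\cardinal{\STORAGES}$ distinct active arcs in $\ARCS^{\TEG}_j$. By Lemma~\ref{lemma:preservation_actif_flow} the total number of active arcs in $\ARCS^{\TEG}_j$ is exactly $\cardinal{\STORAGES}$, so this family exhausts all of them; in particular $\arc = a^\storage_j$ for some $\storage$, i.e. $\arc \in \TransportPlanning^\storage$. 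The main obstacle is precisely this coverage step: it cannot be obtained by tracing a single path but requires the \emph{global} conservation law of Lemma~\ref{lemma:preservation_actif_flow}, together with the pairwise distinctness from~\ref{P2}, to force a bijection at each time slice between the $\cardinal{\STORAGES}$ paths and the $\cardinal{\STORAGES}$ active arcs.
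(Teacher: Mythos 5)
Your proposal is correct and follows essentially the same route as the paper: property~\ref{P1} by induction using the tail--head compatibility and flow preservation of Lemmas~\ref{lem:theta_under} and~\ref{lem:theta_above}, property~\ref{P2} from the injectivity of the maps $\theta_j$ (you phrase it as a backward contrapositive where the paper propagates distinctness forward, but the content is identical), and property~\ref{P3} by the same counting argument that matches the $\cardinal{\STORAGES}$ pairwise-distinct active arcs of the paths at each time slice against the $\cardinal{\STORAGES}$ active arcs guaranteed by Lemma~\ref{lemma:preservation_actif_flow}. No gaps.
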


\begin{proof} (\ref{P1}) Fix $\storage \in \STORAGES$. We have to check that Equation~\eqref{eq:transport_planning}
  is satisfied for $\TransportPlanning^\storage$. This follows from Lemmas~\ref{lem:theta_under}
  and~\ref{lem:theta_above} together with the fact that at initial time
  $\Storageflow_{a_{\Rday{0}},\storage}=1$ and the flows are preserved by the mapping $\theta_j$.
  
  (\ref{P2}) Fix two distinct values $\storage,\storage' \in \STORAGES$. 
  The transport plannings $\TransportPlanning^\storage$ and $\TransportPlanning^{\storage'}$
  differ at the initial time $\overline{0}$ by construction and then differ at all times as
  all the mappings $\theta_\aday$ for $j \in (\LDAYS \cup \RDAYS)$ are injective.

  (\ref{P3}) Fix an arc $\arc\in\ARCS^{\TEG}$ such that $\Storageflow_\arc=1$ 
  and consider $j = \dayarc{a}$.
  Using (\ref{P1}), we obtain that the arcs at time $j$ belonging to the $\cardinal{\STORAGES}$ transport plannings
  $\na{\TransportPlanning^\storage}_{\storage \in \STORAGES}$
  have a storage flow equal to one and, using~(\ref{P2}), we obtain that they are all distincts.
  Now, by~Lemma~\ref{lemma:preservation_actif_flow}, the cardinality of set of arcs at time $j$
  with a storage flow equal to one is also $\cardinal{\STORAGES}$. Thus, 
  as $\Storageflow_\arc=1$, there exists a unique storage $\storage \in \STORAGES$ such that
  $\arc \in \TransportPlanning^\storage$, which ends the proof.
\end{proof}

To conclude, we have shown how to construct $\cardinal{\STORAGES}$ distinct
transport plannings $\{\TransportPlanning^\storage\}_{\storage \in \STORAGES}$
that include all active storage flows in $\Storageflow$, where $\Storageflow$ is
a feasible storage flow of Problem~\eqref{flow_model_problem_formulation}. Algorithm~\ref{Algorithm_transport_planning} gives the pseudocode to determine $\TransportPlanning^\storage$ for a given storage $\storage\in \STORAGES$. The physical transport planning for a storage $\storage$ can be obtained by taking the head of each arc in $\TransportPlanning^\storage$.

\begin{algorithm*}
  \caption{From flow to storage transport planning}
  \label{Algorithm_transport_planning}
  \begin{algorithmic}[1] \Require Initial position of the storage $a^{\storage}_{\Rday{0}}$

    \State $\TransportPlanning^\storage \gets (\arc^{\storage}_{\Rday{0}})$ 
    
    \If{$\headarc{\arc^{\storage}_{\Rday{0}}} \in \STATIONS$}
      
      \texttt{at\_destination} $\gets$ \texttt{true}
      
    \Else  \Comment{$\headarc{\arc^{\storage}_{\Rday{0}}} \in \SOURCES$}
      
      \texttt{at\_destination} $\gets$ \texttt{false}
    \EndIf

    \For{$\aday = 1,..,J$}

      \If{\texttt{at\_destination}}
        \State $\destination \gets \headarc{a^{\storage}_{\Lday{\aday}^-}}$
        \State Append arc $(\destination,\destination,\Lday{\aday})$ to $\TransportPlanning^\storage$
        
    \If{$\sum\limits_{\source \in \SOURCES} \SFlow{\Lday{\aday}}{\source}{\destination}=0$}
      \Comment{No swap occurs at destination $\destination$}
    
      \State Append arc $(\destination,\destination,\Rday{\aday})$ to $\TransportPlanning^\storage$
      \Comment{The storage stays at destination $d$}
      
    \Else \Comment{A swap occurs at destination $d$}
    
      \State Find (the unique) source $\source'$ such that
      $\SFlow{\Lday{\aday}}{\destination}{\source'}=1$
    
      \State Append arc $(\destination,\source',\Rday{\aday})$ to $\TransportPlanning^\storage$ \Comment{The storage is sent to source $\source'$}
      \State \texttt{at\_destination} $\gets$ \texttt{false}
    
    \EndIf

    \Else \Comment{\texttt{at\_destination}=false}
      \State $\source \gets \headarc{a^{\storage}_{\Lday{\aday}^-}}$
      \State Get next arc $(\source,p,\Lday{\aday})$ using $\bijection{\Lday{\aday}}{\source}$ \Comment{see Equation~\eqref{eq:thetaLday}}
      
    \State Append arc $(\source,p,\Lday{\aday})$ to $\TransportPlanning^\storage$
            
    \If{\texttt{p} $\in \STATIONS$}   \Comment{The storage is sent to destination $d$}   

    \State Append arc $(p,p,\Rday{\aday})$ to $\TransportPlanning^\storage$

    \State \texttt{at\_destination} $\gets$ \texttt{true}

  \Else  \Comment{The storage stays at source $\source$}

    \State Append arc $(\source,p,\Rday{\aday})$ to $\TransportPlanning^\storage$
    
    \EndIf

    \EndIf
    
    \EndFor  
    
    \State \Return  $\TransportPlanning^\storage$
  \end{algorithmic}
\end{algorithm*}

\section{Conclusion}\label{se:conclusion}

In this paper, we tackle an hydrogen transportation problem ---in which mobile
storages are dynamically routed and swapped--- that we called PRP-MI. Modeling the
problem using storage (binary) and hydrogen (continuous) flows on a
time-expanded graph, we obtain a MILP formulation that captures both the daily
scheduling of the mobile storages and the management of inventories. The
\algname{MA} method (direct use of a MILP solver) successfully handles small to medium problem instances within reasonable computational times, and outperforms the greedy heuristic,
which is inspired by the heuristic employed by the industrial partner. For
larger instances, we introduced the \algname{RH} method, that first determines a
transport planning of the mobile storages, and then, finds the quantity of hydrogen to purchase at the
sources and how to allocate it to fill the storages. Numerical experiments
indicate that this approach achieves two main results: it provides better
results compared to \algname{MA} for large instance, and it obtains tight lower
bounds, while remaining computationally tractable, demonstrating its practical
suitability for hydrogen logistics. Future research should integrate
stochasticity, as for example in hydrogen demand and in mobile storage transportation times, using similar technics as the ones used in~\cite{StochasticAxel}.

\setlength{\parskip}{6pt plus 2pt}
\appendix
\section{Linearization of constraints}\label{appendix:linearization}
% ~\eqref{swap_time}, \eqref{flow_arc_destination}, \eqref{flow_arc_source}, \eqref{swap_destination} and~\eqref{at_most_one_storage_destination}}
\label{hympulsion_linearization} In this Appendix, we prove
Lemma~\ref{lemma:linearization}, by giving a linear reformulation of
Constraints~\eqref{eq:flow_arc}, \eqref{eq:emptying_bounded1},
\eqref{eq:swap_destination}, \eqref{eq:demand_satisfaction2},
\eqref{eq:swap_time}, \eqref{eq:emtying_bounded2},
\eqref{eq:permutation_constraints}, and \eqref{eq:forced_arc_source} by using
additional variables and constraints.

$\bullet$ First, we consider the implications constraints~\eqref{eq:flow_arc}
and~\eqref{eq:swap_time} and~\eqref{eq:forced_arc_source} and just show how to linearize Constraint~\eqref{eq:flow_arc} as the
same mechanism applies to Constraint~\eqref{eq:swap_time} and~\eqref{eq:forced_arc_source}.
The implication constraint
\begin{equation*}
  \Hydrogenflow_a> 0 \implies \Storageflow_a = 1
  \eqsepv \forall a \in \ARCS^{\TEG} \eqfinp
\end{equation*}
is equivalent to the following linear constraint
\begin{equation*}
  \Hydrogenflow_a
  \leq \StockMax\Storageflow_a
  \eqsepv \forall a \in \ARCS^{\TEG} \eqfinv
\end{equation*}
where $\StockMax$ is the hydrogen flow upper bound (defined in~\ref{Adeux}).  Indeed, if
$\Hydrogenflow_a > 0$, this inequality forces $\Storageflow_a = 1$, because
otherwise the right-hand side would be zero, contradicting the
inequality. Conversely, if $\Hydrogenflow_a = 0$, the variable $\Storageflow_a$
is not constrained and can take any value in $\{0,1\}$.

$\bullet$ Second, we address the linearization of the product between a binary
variable and a continuous variable present in the right hand side of both
Constraints~\eqref{eq:swap_destination} and~\eqref{eq:demand_satisfaction2}.  To
linearize the product, we introduce a new variable denoted by $p$ together with
an equality constraint
\begin{align}
  p = 
  \overbrace{\findi{\na{0}}\bp{\sum\limits_{\source \in \SOURCES}
  \SFlow{\Lday{\aday}}{\source}{\destination}}}^{b}
  \HFlow{\Lday{\aday}}{\destination}{\destination}
  \eqfinv
  \label{eq:product_to_linearize}
\end{align}
which remains to be linearized. Noting that
the continuous variable $\HFlow{\Lday{\aday}}{\destination}{\destination}$
is bounded as it belongs to $[0,\StockMax]$, the added
nonlinear equality constraint~\eqref{eq:product_to_linearize}
is equivalent to the following set of linear inequalities.
\begin{subequations}
  \label{eq_prod_linearized}
  \begin{align}
    &p \leq\HFlow{\Lday{\aday}}{\destination}{\destination} \eqfinv
      \label{eq:linear_prod_continuous1}
  \\
  &p
    \leq\StockMax b \eqfinv
      \label{eq:linear_prod_continuous2}
  \\
  &0 \leq p \eqfinv
      \label{eq:linear_prod_continuous3}
  \\
    &\HFlow{\Lday{\aday}}{\destination}{\destination} -\StockMax (1- b)
      \leq p \eqfinp
      \label{eq:linear_prod_continuous4}  
\end{align}
\end{subequations}
Indeed, we consider two cases. If $b=0$,
Constraint~\eqref{eq:linear_prod_continuous2} becomes $p \leq 0$, which combined
with Constraint~\eqref{eq:linear_prod_continuous3}, implies that $p = 0$.  Then,
Constraint~\eqref{eq:linear_prod_continuous4} reduces to
$\HFlow{\Lday{\aday}}{\destination}{\destination} \le \StockMax$ and is satisfied.
Otherwise, if $b=1$, Constraint~\eqref{eq:linear_prod_continuous4} simplifies to
$\HFlow{\Lday{\aday}}{\destination}{\destination} \leq p$ which combined with
Constraint~\eqref{eq:linear_prod_continuous1}, gives
$p =\HFlow{\Lday{\aday}}{\destination}{\destination}$. Then,
Constraint~\eqref{eq:linear_prod_continuous2} boils down to
$\HFlow{\Lday{\aday}}{\destination}{\destination} \le \StockMax$ which is
satisfied. We conclude that
Constraints~\eqref{eq_prod_linearized} and~\eqref{eq:product_to_linearize} are
equivalent as they share the same solutions for their common variables.

$\bullet$  Third, we move to Constraints~\eqref{eq:emptying_bounded1} and
\eqref{eq:emtying_bounded2} and just show how to linearize
Constraint~\eqref{eq:emptying_bounded1} as the same mechanism applies to
Constraint~\eqref{eq:emtying_bounded2}. For $\aday \in \DAYS$ and
$\destination \in \STATIONS$, the Constraint~\eqref{eq:emptying_bounded1}
reads as follows
\begin{equation}
  \nodeV{z}{\Lday{\aday}}{\destination}
  =\min( \HFlow{\Lday{\aday}^-}{\destination}{\destination},
  \nodeV{\Demand}{\Lday{\aday}}{\destination}) \eqfinp
  \label{eq:emptying_bounded1_copy}
\end{equation}
By adding a binary variable $b$ and noting that the continuous variables
$\HFlow{\Lday{\aday}}{\destination}{\destination}$ and
$\nodeV{\Demand}{\Lday{\aday}}{\destination}$ both belong to $[0,\StockMax]$
(using Assumption~\ref{Adeux} for
$\nodeV{\Demand}{\Lday{\aday}}{\destination}$),
we show that Constraint~\eqref{eq:emptying_bounded1_copy} is equivalent
to the following set of linear constraints~\eqref{eq:min_non_linear_c}
\begin{subequations}
  \label{eq:min_non_linear_c}
  \begin{align}
  &  \nodeV{z}{\Lday{\aday}}{\destination} \leq  \HFlow{\Lday{\aday}^-}{\destination}{\destination} \eqfinv
      \label{eq:linear_min1}
  \\
  & \nodeV{z}{\Lday{\aday}}{\destination}
    \leq  \nodeV{\Demand}{\Lday{\aday}}{\destination} \eqfinv
      \label{eq:linear_min2}
  \\
  & \nodeV{z}{\Lday{\aday}}{\destination} \geq \HFlow{\Lday{\aday}^-}{\destination}{\destination} - b\StockMax \eqfinv
      \label{eq:linear_min3}
  \\
  &\nodeV{z}{\Lday{\aday}}{\destination} \geq  \nodeV{\Demand}{\Lday{\aday}}{\destination} - (1-b)\StockMax \eqfinp
      \label{eq:linear_min4}  
\end{align}
\end{subequations}
First, assume that Constraint~\eqref{eq:emptying_bounded1_copy} is satisfied.
Then, we have two possible cases or $(i)$
$\nodeV{z}{\Lday{\aday}}{\destination}=\nodeV{\Demand}{\Lday{\aday}}{\destination}
\le \HFlow{\Lday{\aday}^-}{\destination}{\destination}$ or $(ii)$
$\nodeV{z}{\Lday{\aday}}{\destination}=
\HFlow{\Lday{\aday}^-}{\destination}{\destination} \le
\nodeV{\Demand}{\Lday{\aday}}{\destination}$. Choosing $b=1$ in case $(i)$ and
$b=0$ in case $(ii)$ straightfowardly give a solution to
Constraints~\eqref{eq:min_non_linear_c}. Second,
the solutions of Constraint~\eqref{eq:min_non_linear_c} are the
following. When $b=0$, Constraint~\eqref{eq:linear_min4} is always satistied and
from Constraints~\eqref{eq:linear_min1}, \eqref{eq:linear_min2} and~\eqref{eq:linear_min3}
we obtain that $\nodeV{z}{\Lday{\aday}}{\destination}=\nodeV{\Demand}{\Lday{\aday}}{\destination}
\le \HFlow{\Lday{\aday}^-}{\destination}{\destination}$.
When $b=1$, Constraint~\eqref{eq:linear_min3} is always satistied and
from Constraints~\eqref{eq:linear_min1}, \eqref{eq:linear_min2} and~\eqref{eq:linear_min4}
we obtain that
$\nodeV{z}{\Lday{\aday}}{\destination}= \HFlow{\Lday{\aday}^-}{\destination}{\destination}
\le \nodeV{\Demand}{\Lday{\aday}}{\destination}$. Thus, 
in both cases we obtain that Constraint~\eqref{eq:emptying_bounded1_copy} is satisfied.

$\bullet$ 
Finally, we fix $\aday \in \DAYS$ and
$\source \in \SOURCES$ and address the linearization of the nondecreasing
stock constraint~\eqref{eq:permutation_constraints} which is
satisfied if there exists a bijection
$\bijection{\Lday{\aday}}{\source}: \PermutationBijectionDomain_{\source,\lday} \to
\PermutationBijectionCoDomain_{\source,\lday}$ such that
\begin{equation}
  \label{eq:permutation_constraints_bis}
  \Flow{f^{\mathsf{in}}}{\Lday{\aday}}{\source}{l} \leq
  \Flow{f^{\mathsf{out}}}{\Lday{\aday}}{\source}{\bijection{\Lday{\aday}}{\source}(l)}
 \eqsepv \forall l \in \PermutationBijectionDomain_{\source,\lday} \eqfinp
\end{equation}

We introduce a matrix $\np{\beta_{n,m}}_{n,m \in \MaxArcsSource}$ of binary variables,
where $\MaxArcsSource$ is defined in Equation~\eqref{eq:MaxArcsSource}, and we
prove that Constraint~\eqref{eq:permutation_constraints_bis} is equivalent to
the set of constraints~\eqref{sigma_equiv} defined by
\begin{subequations}
  \label{sigma_equiv}
  \begin{align}
    \sum\limits_{m \in \MaxArcsSource} \beta_{n,m}
    &= \Flow{y^{\mathsf{in}}}{\Lday{\aday}}{\source}{n}  \eqsepv \forall n \in \MaxArcsSource \eqfinv
      \label{permutation_constraint_linear_1}
    \\
    \sum\limits_{n \in \MaxArcsSource} \beta_{n,m}
    &=  \Flow{y^{\mathsf{out}}}{\Lday{\aday}}{\source}{m} \eqsepv \forall m \in  \MaxArcsSource \eqfinv
      \label{permutation_constraint_linear_2}
    \\
    \Flow{f^{\mathsf{in}}}{\Lday{j}}{\source}{n}
    \leq \Flow{f^{\mathsf{out}}}{\Lday{j}}{\source}{m}
    &+ (1-\beta_{n,m})\StockMax \eqsepv \forall n,m \in \MaxArcsSource  \eqfinp
      \label{permutation_constraint_linear_3}
  \end{align}
\end{subequations}
\balance
We start be proving that when Constraint~\eqref{sigma_equiv} is satisfied, there
exists a solution to Constraint~\eqref{eq:permutation_constraints_bis}.  For
that purpose, consider a matrix $\beta$ and four vectors $f^{\mathsf{in}}$,
$f^{\mathsf{out}}$, $y^{\mathsf{in}}$ and $y^{\mathsf{out}}$ satisfying
Constraints~\eqref{sigma_equiv}. For all
$n \not\in \PermutationBijectionDomain_{\source,\lday}$ we have by definition of
$\PermutationBijectionDomain_{\source,\lday}$ that
$\Flow{y^{\mathsf{in}}}{\Lday{\aday}}{\source}{n}=0$ and therefore using
Constraint~\eqref{permutation_constraint_linear_1} that $\beta_{n,m}=0$ for all
$m\in\MaxArcsSource$. In a similar way, for all
$m\not\in \PermutationBijectionCoDomain_{\source,\lday}$ we have
$\Flow{y^{\mathsf{out}}}{\Lday{\aday}}{\source}{m}=0$ and therefore using
Constraint~\eqref{permutation_constraint_linear_2} that $\beta_{n,m}=0$ for all
$n\in\MaxArcsSource$. We therefore obtain that, for all
$n \in\PermutationBijectionDomain_{\source,\lday}$,
\begin{align*}
  \sum\limits_{m \in \PermutationBijectionCoDomain_{\source,\lday}} \beta_{n,m}
  +
  \underbrace{\sum\limits_{m \not\in \PermutationBijectionCoDomain_{\source,\lday}} \beta_{n,m}}_{=0}
  &=  \Flow{y^{\mathsf{in}}}{\Lday{\aday}}{\source}{n} =  1 \eqsepv
\end{align*}
and, for all $m \in \PermutationBijectionCoDomain_{\source,\lday}$,
\begin{align*}
  \sum\limits_{n \in \PermutationBijectionDomain_{\source,\lday}} \beta_{n,m}
  +
  \underbrace{\sum\limits_{n \not\in \PermutationBijectionDomain_{\source,\lday}} \beta_{n,m}}_{=0}
  &=\Flow{y^{\mathsf{out}}}{\Lday{\aday}}{\source}{m} = 1 \eqfinp
\end{align*}
Thus, the matrix $\beta$ restricted to
$\PermutationBijectionDomain_{\source,\lday}{\times}%
\PermutationBijectionCoDomain_{\source,\lday}$ is a permutation matrix as a
square binary matrix that has exactly one entry of 1 in each row and each column
with all other entries~0. All the elements of the matrix $\beta$ outside its
restriction to $\PermutationBijectionDomain_{\source,\lday}{\times}%
\PermutationBijectionCoDomain_{\source,\lday}$ have value zero. Now, we build a
bijection
$\bijection{\Lday{\aday}}{\source}: \PermutationBijectionDomain_{\source,\lday}
\to \PermutationBijectionCoDomain_{\source,\lday}$ as follows
\begin{equation}
  \forall n \in \PermutationBijectionDomain_{\source,\lday}
  \eqsepv
  \na{\bijection{\Lday{\aday}}{\source}(n)} =
  \argmax_{m \in \PermutationBijectionCoDomain_{\source,\lday}} \beta_{n,m}
  \eqfinp
\end{equation}
We consider Constraint~\eqref{permutation_constraint_linear_3}.
First, when $\beta_{n,m}=0$, this constraint reduces to
\begin{align}
  \Flow{f^{\mathsf{in}}}{\Lday{j}}{\source}{n}
  \leq \Flow{f^{\mathsf{out}}}{\Lday{j}}{\source}{m}
  &+ \StockMax \eqsepv \forall m \in \MaxArcsSource  \eqfinp
    \label{eq_ok}
\end{align}
which is satisfied as both $\Flow{f^{\mathsf{in}}}{\Lday{j}}{\source}{n}$ and
$\Flow{f^{\mathsf{out}}}{\Lday{j}}{\source}{m}$ are in $[0,\StockMax]$.  Second,
when considering the subsets of indices for which $\beta_{n,m}=1$,
Constraint~\eqref{permutation_constraint_linear_3} boils down to
\begin{align}
  \Flow{f^{\mathsf{in}}}{\Lday{j}}{\source}{n}
  \leq \Flow{f^{\mathsf{out}}}{\Lday{j}}{\source}{\bijection{\Lday{\aday}}{\source}(n)} 
  \eqsepv \forall n \in \PermutationBijectionDomain_{\source,\lday}  \eqfinv
\end{align}
which is precisely Constraint~\eqref{eq:permutation_constraints_bis}.

We prove the converse, that is when
Constraints~\eqref{eq:permutation_constraints_bis} is satisfied, there exists a
matrix $\beta$ satisfying Constraint~\eqref{sigma_equiv}.  We consider a bijection
$\bijection{\Lday{\aday}}{\source}: \PermutationBijectionDomain_{\source,\lday}
\to \PermutationBijectionCoDomain_{\source,\lday}$ satisfying
Constraint~\eqref{eq:permutation_constraints_bis} and build a matrix $\beta$ as
follows
\begin{equation}
  \beta_{n,m}=
  \begin{cases}
    \findi{\na{m}}(\bijection{\Lday{\aday}}{\source}(n))
    &: n \in\PermutationBijectionDomain_{\source,\lday}
    \eqfinv\\
    0
    &: n \not\in\PermutationBijectionDomain_{\source,\lday}
      \eqfinp
  \end{cases}
\end{equation}
Using the definition of $\beta$ and the fact that
$\bijection{\Lday{\aday}}{\source}$ is a bijection it is straightforward to
obtain that $\beta$, $f^{\mathsf{in}}$ and $f^{\mathsf{out}}$ satisfy
Constraint~\eqref{sigma_equiv}.

\end{document}